\newtheorem{theorem}{Theorem}[section]
\newtheorem{proposition}[theorem]{Proposition}
\theoremstyle{definition}
\newtheorem{definition}[theorem]{Definition}
\newtheorem{question}[theorem]{Question}
\newtheorem{remark}[theorem]{Remark}
\newtheorem{construction}[theorem]{Construction}
\newtheorem{example}[theorem]{Example}
\newtheorem{problem}[theorem]{Problem}
\newcommand{\C}{\mathbb{C}}
\newcommand{\F}{\mathbb{F}}
\newcommand{\K}{\mathbb{K}}
\renewcommand{\S}{\mathbb{S}}
\newcommand{\T}{\mathbb{T}}
\newcommand{\Z}{\mathbb{Z}}
\begin{document}

\title[Semifields, relative difference sets, and bent functions]{Semifields, relative difference sets,\\and bent functions}

\author{Alexander Pott}
\address{Faculty of Mathematics, Otto-von-Guericke University, Universit\"atsplatz~2, 39106 Magdeburg, Germany}
\email{alexander.pott@ovgu.de}

\author{Kai-Uwe Schmidt}
\address{Faculty of Mathematics, Otto-von-Guericke University, Universit\"atsplatz~2, 39106 Magdeburg, Germany}
\email{kaiuwe.schmidt@ovgu.de}

\author{Yue Zhou}
\address{Department of Mathematics and System Sciences, College of Science, National University of Defense Technology, Changsha, China}
\email{yue.zhou.ovgu@gmail.com}

\date{13 January 2014}

\subjclass[2010]{12K10, 05B10, 05B25, 06E30}

\begin{abstract}
Recently, the interest in semifields has increased due 
to the discovery of several new families and progress
in the classification problem. Commutative semifields play an
important role since they are equivalent to certain planar functions
(in the case of odd characteristic) and to modified planar functions
in even characteristic. Similarly, commutative semifields
are equivalent to relative difference sets. The goal of this survey is to
describe the connection between these concepts. Moreover, we shall discuss
power mappings that are planar and consider component functions
of planar mappings, which may be also viewed as projections of
relative difference sets. It turns out that the component functions in the
even characteristic case are related to negabent functions
as well as to $\Z_4$-valued bent functions.
\end{abstract}

\maketitle

\section{Introduction}

Semifields, also called distributive quasifields, have been investigated for almost a century. In this article, we do not want to give a survey about semifields in general, but rather concentrate on commutative semifields. Using the so called Knuth orbit, these are also equivalent to symplectic semifields. It is also not the intention of this article to survey commutative (or symplectic) semifields in general. We refer the reader to the excellent introduction by Lavrauw and Polverino~\cite{LavPol2012}. The aim here is to discuss the equivalence between semifields, relative difference sets, and certain planar functions, which are mappings on finite fields. We hope that the investigation of relative difference sets may also stimulate the research on semifields. For instance, for the investigation of relative difference sets, tools from algebraic number theory are often used, which as far as we know have been only recently applied in the theory of semifields~\cite{PotZho2011}.
\par
There is a difference between semifields in even and odd characteristic, which induces also differences between their corresponding relative difference sets and their planar functions. For example, on the level of the associated relative difference set, the ambient group is elementary abelian in the odd characteristic case, whereas it equals $\Z_4^{\; n}$ (for some~$n$) in the even characteristic case. Using so called projections of relative difference sets, one obtains relative difference sets in subgroups. Such a projection may be viewed as a component function of the planar function associated with the relative difference set. In the case of odd characteristic, the components are $p$-ary bent functions, whereas in the even characteristic case they are (essentially) negabent functions. 
\par
This survey is organized as follows. In Section~\ref{sec:semifields}, we recall some results on semifields, including the connection to projective planes, and discuss the equivalence problem for semifields. In Section~\ref{sec:rds}, we give some background on relative difference sets. In Section~\ref{sec:connections}, we explain the connection between these concepts. In particular we give a partial characterization of those relative difference sets that correspond to commutative semifields. In Sections~\ref{sec:p_odd} and~\ref{sec:p_even}, we look more closely at some examples of semifields, in particular at those that can be described by monomial planar functions. Section~\ref{sec:p_odd} deals with the odd characteristic case and Section~\ref{sec:p_even} with the even characteristic case. In Section~\ref{sec:components}, we briefly investigate the component functions or, equivalently, the projections of the corresponding relative difference sets. We conclude with some open problems in Section~\ref{sec:conclusion}.

%%%%%%%%%%%%%%%%%%%%%%%%%%%%%%%%%%%%%%%%%%%%%%%%%%%%%%%%%%%%%%%%%%%%%%%%%%%%%

\section{Semifields}
\label{sec:semifields}

Roughly speaking, a semifield is a field without associativity for the multiplication. More precisely, a (finite) semifield is defined as follows. 
\begin{definition}
\label{def:semifield}
Let $\S$ be a finite set containing at least two elements. Then $\S$ together with two binary operations $+$ and $\circ$ is a {\em semifield} $(\S,+,\,\circ\,)$ if the following hold: 
\begin{itemize}
\item[(S1)] $(\S,+)$ is an abelian group with identity element $0$.
\item[(S2)] $x\circ(y+z)=x\circ y + x \circ z$ and
$(x+y)\circ z=x\circ z + y \circ z$
for all
$x,y,z\in \S$.
\item[(S3)]
$x\circ y=0$ implies $x=0$ or $y=0$.
\item[(S4)] There is an element $1\ne 0$ such that $1 \circ x=x\circ 1 = x$ for all $x\in \S$.
\end{itemize}
If (S4) is missing, then $\S$ is a \emph{pre-semifield}. If the operation $\circ$ is commutative, then $\S$ is a \emph{commutative} semifield.
\end{definition}
\par
Notice that, since $\S$ is finite, each of the equations
\begin{align*}
a\circ x & = b\\
x\circ a & = b
\end{align*}
has a unique solution $x$ (this would not be true if $\S$ were infinite, in which case the existence of a solution of the above equations has to be added to the axioms for a semifield).
\par
The distributive laws together with (S3) simply say that multiplication from the left and multiplication from the right act as automorphisms of the additive group of the semifield. Let $a$ and $b$ be two nonzero elements in $\S$. Then we find an automorphism $x$ of $(\S,+)$ such that $x\circ a=b$. Hence $a$ and $b$ have the same order and $\S$ must be an elementary abelian $p$-group. This prime number $p$ is also called the {\em characteristic} of the semifield.
\par
Of course, finite fields are semifields. For a current list of known semifields (including some infinite families and some sporadic examples), we refer to \cite{LavPol2012}. We shall see several examples in later sections, which we describe in terms of planar functions. We note that semifields of order $p$ or $p^2$ are necessarily finite fields~\cite{Dic1906}. Moreover, there are only two classes of semifields of order $p^3$~\cite{Men1977}. The situation is getting much more involved for semifields of order $p^4$. 
\par
The fascination of semifields comes from the fact that the elements in $\S$ have two different meanings: Multiplication from the left is a bijective linear mapping on $\S$, viewed as a vector space, but the elements on the right hand side of $\circ$ are just considered to be vectors. This may help to motivate the following definition of isotopy of semifields.
\begin{definition}
Two semifields $(\S,+,\,\circ_s)$ and $(\T,+,\,\circ_t)$ are {\em isotopic} if there are bijective linear mappings $F$, $G$, and $H$ from $\T$ to $\S$ such that
\[
F(x)\circ_s G(y)=H(x\circ_t y)\quad\text{for all $x,y\in \T$}.
\]  
\end{definition}
\par
There are essentially two reasons why isotopism is defined in this unusual way, involving different mappings $F$ and $G$ on the two sides of a product $x\circ y$. One reason is that, as mentioned above, semifield elements on the left are associated with linear mappings, whereas on the right they are interpreted as vectors. Another reason is that every semifield can be used to construct a projective plane and, as we shall see in Proposition~\ref{pro:isotopic-isomorphic}, isotopy just means that the planes are isomorphic. (We do not recall the definition of an isomorphism of an incidence structure here, since we hope it is clear. Otherwise we refer to \cite[Section~I.4]{BetJunLen1999}.) It is known that every pre-semifield is isotopic to a semifield~\cite{LavPol2012}, which is the reason why we restrict ourselves to semifields.
\begin{definition}
A \emph{projective plane} is a point-line incidence structure with the following three properties:
\begin{itemize}
\item[(P1)] Every two different points are contained in a unique line.
\item[(P2)] Every two different lines intersect in exactly
one point.
\item[(P3)] There are four points with the property that no three of them are
contained in a single line.
\end{itemize}
\end{definition}
For background on projective planes, we refer the reader to~\cite{HugPip1973}. If the number of points and lines in a projective plane is finite, then there is a number $n$ (called the \emph{order} of the plane) such that each line contains exactly $n+1$ points and through each point there are exactly $n+1$ lines.
\par
The following are perhaps the two most famous open problems concerning projective planes.
\begin{question}[Prime power conjecture]
\label{que:prime_power_conjecture}
Is the order of a finite projective plane necessarily a prime power?
\end{question}
\par
\begin{question}
\label{que:prime_order_conjecture}
Is a projective plane of prime order unique (up to isomorphism)?
\end{question}
\par
Most researchers believe that Question~\ref{que:prime_order_conjecture} has a positive answer. 
\par
We shall now describe how a projective plane can be constructed from a semifield.
\begin{construction}
\label{con:1}
Let  $(\S,+,\,\circ\,)$ be a semifield and let $m,b\in\S$. We define a point $(x,y)\in \S\times\S$ to be on the line $[m,b]$ if $m\circ x + b=y$. This defines a point-line incidence structure with $|\S|^2$ points and $|\S|^2$ lines.
\end{construction}
\par
The incidence structure in Construction~\ref{con:1} is not quite a projective plane. It is a divisible design, whose definition is recalled below. We shall see in Construction~\ref{con:plane_from_design} how a divisible design gives rise to a projective plane. For background and a substantial collection of results on divisible designs and projective planes, we refer the reader to the two books~\cite{BetJunLen1999}.
\begin{definition}
A {\em divisible design} with parameters $(m,n,k,\lambda)$ is a point-line incidence structure with $mn$ points and $mn$ lines that satisfies the following properties:
\begin{itemize}
\item[(D1)] The point set can be partitioned into $m$ point classes, each of size $n$.
\item[(D2)] The line set can be partitioned into $m$ line classes, each of size $n$.
\item[(D3)] Every two different points not in a common point class are joined by exactly $\lambda$ lines.
\item[(D4)] Every two different lines not in a common line class intersect in exactly $\lambda$ points. 
\item[(D5)] Every line contains exactly $k$ points, and through every point there are exactly $k$ lines.
\end{itemize}
\end{definition}
\par
Note that the definition of a divisible design (as the definition of a projective plane) is symmetric in points and lines. Hence
the incidence structure obtained by interchanging points and lines (the so called \emph{dual incidence structure}) is again a divisible design (a projective plane). 
\par
The following result is readily verified using the defining properties of a semifield.
\begin{proposition}
If $\S$ is a semifield of order $n$, then Construction \ref{con:1} gives a divisible $(n,n,n,1)$ design.
\end{proposition}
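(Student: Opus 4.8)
The plan is to exhibit explicit point and line classes and then verify the five axioms (D1)--(D5) for the parameters $(n,n,n,1)$ directly from the semifield axioms. For the point classes I would group the points by their first coordinate, setting $P_c=\{(c,y):y\in\S\}$ for each $c\in\S$; for the line classes I would group the lines by their slope, setting $L_c=\{[c,b]:b\in\S\}$. Each family consists of $n$ classes of size $n$ partitioning the $n^2$ points (respectively lines), which settles (D1) and (D2).

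For (D5) I would count incidences. A line $[m,b]$ contains, for each $x\in\S$, the unique point $(x,m\circ x+b)$, so it has exactly $k=n$ points. Dually, a point $(x_0,y_0)$ lies on $[m,b]$ if and only if $b=y_0-m\circ x_0$, so for each of the $n$ choices of $m$ there is exactly one admissible $b$; hence exactly $n$ lines pass through each point.

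The heart of the argument is (D3) and (D4), where the unique solvability of the semifield equations enters. For (D3), two distinct points $(x_1,y_1)$ and $(x_2,y_2)$ lie in distinct point classes precisely when $x_1\ne x_2$. A line $[m,b]$ joins them if and only if $m\circ x_1+b=y_1$ and $m\circ x_2+b=y_2$; subtracting and using the left distributive law (S2) gives $m\circ(x_1-x_2)=y_1-y_2$ with $x_1-x_2\ne 0$. Since the equation $x\circ a=b$ has a unique solution, $m$ is determined, and then $b=y_1-m\circ x_1$ follows; thus exactly $\lambda=1$ line joins the two points. The argument for (D4) is dual: distinct lines $[m_1,b_1]$ and $[m_2,b_2]$ lie in distinct line classes if and only if $m_1\ne m_2$, and a common point must satisfy $(m_1-m_2)\circ x=b_2-b_1$ by the right distributive law, which has a unique solution $x$ because $a\circ x=b$ is uniquely solvable; the corresponding $y$ is then forced, giving exactly one common point.

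Finally, I would confirm that the class structure is consistent with the axioms: two distinct points sharing an $x$-coordinate force $y_1=y_2$ on any common line, hence are joined by no line, and two distinct lines sharing a slope force $b_1=b_2$ at any common point, hence meet nowhere. This is exactly why (D3) and (D4) are required only for points and lines outside a common class. There is no real obstacle here, as the verification is routine; the one point demanding care is tracking which of the two solvability statements is invoked, since (D3) uses unique solvability of $x\circ a=b$ while (D4) uses that of $a\circ x=b$.
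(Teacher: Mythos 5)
Your verification is correct and is exactly the routine check the paper leaves to the reader (it states only that the proposition ``is readily verified using the defining properties of a semifield''): the point/line classes by first coordinate and by slope, the incidence counts, and the reduction of (D3) and (D4) to unique solvability of $x\circ a=b$ and $a\circ x=b$ respectively via the two distributive laws are all as intended. No gaps.
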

\par
We now show how to extend such a divisible design uniquely to a projective plane by adding $n+1$ points and $n+1$ lines.
\begin{construction}
\label{con:plane_from_design}
Let $\mathcal{D}$ be a divisible $(n,n,n,1)$ design. Add a new point $\infty$. For each point class $\mathcal{P}$ of $\mathcal{D}$, add a new line joining the points in $\mathcal{P}$ and $\infty$. For each line class $\mathcal{L}$ of $\mathcal{D}$, add a new point $\infty_{\mathcal{L}}$ to each line in $\mathcal{L}$. Finally, add a new line joining all new points. It is not difficult to see that this incidence structure is a projective plane of order~$n$.
\end{construction}
\par
We note that two isomorphic divisible designs give rise to isomorphic projective planes, but the converse is not necessarily true: One projective plane may be obtained from different (non-isomorphic) divisible designs. This can occur if the automorphism group of the plane is not transitive on lines. 
\par
A \emph{semifield plane} of order $n$ is the projective plane constructed from a semifield of order $n$ via Constructions~\ref{con:1} and~\ref{con:plane_from_design}. The next result shows why semifield isotopy is a natural concept.
\par
\begin{proposition}[\cite{Alb1960}]
\label{pro:isotopic-isomorphic}
Two semifield planes are isomorphic if and only if the corresponding semifields are isotopic.
\end{proposition}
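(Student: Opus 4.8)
The plan is to prove the two implications separately, working throughout with the coordinate description of the planes coming from Constructions~\ref{con:1} and~\ref{con:plane_from_design}. I write $\infty$ for the extra point that lies on every vertical line $x=c$, and $\infty_m$ for the point adjoined to all lines $[m,\cdot]$ of slope $m$, so that the line at infinity is $\{\infty\}\cup\{\infty_m : m\in\S\}$, the $x$-axis is the line $O\infty_0=\{(x,0)\}$ through the origin $O=(0,0)$, and the $y$-axis is $O\infty=\{(0,y)\}$.

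For the direction ``isotopic $\Rightarrow$ isomorphic'', suppose $F(x)\circ_s G(y)=H(x\circ_t y)$ for all $x,y$, with $F,G,H$ bijective and linear. I would simply write down the candidate map $\phi$ sending the affine point $(x,y)$ to $(G(x),H(y))$, the infinite point $\infty_m$ to $\infty_{F(m)}$, and $\infty$ to $\infty$, and on lines sending $[m,b]$ to $[F(m),H(b)]$, the vertical line $x=c$ to $x=G(c)$, and the line at infinity to itself. Since $F,G,H$ are bijections preserving the point and line types, $\phi$ is automatically a bijection, so only incidence must be checked. The sole nontrivial incidence is the affine one: from $m\circ_t x+b=y$, applying the additive map $H$ and the isotopy identity gives $F(m)\circ_s G(x)+H(b)=H(y)$, which is exactly incidence of $(G(x),H(y))$ with $[F(m),H(b)]$; all remaining incidences hold by construction. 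This direction is routine.

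For ``isomorphic $\Rightarrow$ isotopic'' I would first treat a frame-preserving isomorphism $\phi$, meaning one carrying the line at infinity, the points $\infty$ and $\infty_0$, and the origin of one plane to those of the other. Restricting $\phi$ to the $x$-axis, the $y$-axis and the infinite points yields bijections $G,H,F\colon\T\to\S$ fixing $0$, via $(x,0)\mapsto(G(x),0)$, $(0,y)\mapsto(0,H(y))$ and $\infty_m\mapsto\infty_{F(m)}$. Locating $(x,y)$ as the meet of the vertical line through $(x,0)$ with the slope-$0$ line through $(0,y)$ then forces $\phi(x,y)=(G(x),H(y))$ and $\phi[m,b]=[F(m),H(b)]$, so that incidence-preservation reads $m\circ_t x+b=y \iff F(m)\circ_s G(x)+H(b)=H(y)$. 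Putting $b=0$ gives the isotopy identity $H(m\circ_t x)=F(m)\circ_s G(x)$, and the additivity, hence $\F_p$-linearity, of $H$, $G$, $F$ drops out of the two distributive laws by fixing one factor and varying the other. If moreover the unit point is matched, the same identity forces $F=G=H$ and produces an isomorphism of semifields.

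The remaining, and I expect hardest, step is to reduce a general isomorphism to the frame-preserving case. The clean formulation is that the coordinatizing semifield depends on the plane only through the chosen semifield frame, and that any two semifield frames of a single plane yield isotopic semifields. Granting this, one transports the standard frame of the $\T$-plane by an arbitrary isomorphism to a semifield frame of the $\S$-plane; since coordinatization is intrinsic, the resulting semifield is isomorphic to $\T$, and the change-of-frame statement then makes it isotopic to $\S$. Proving that change-of-frame statement is where the special structure of a semifield plane is essential: it is simultaneously a translation plane with axis $\ell_\infty$ and a dual translation plane with respect to the flag $(\infty,\ell_\infty)$, and the attendant central collineations---translations to normalize the origin, together with the further elations/homologies and the free choice of unit point realizing the three independent coordinate rescalings---are exactly what is needed to move one semifield frame to another while altering the coordinate semifield only up to isotopy. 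Verifying that these collineations act transitively enough on semifield frames, equivalently that every change of frame induces an isotopy of the coordinate rings, is the technical heart of the argument.
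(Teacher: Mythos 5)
The paper offers no proof of this proposition at all; it is quoted from Albert's 1960 article, so there is no in-paper argument to compare yours against. On its own merits, your ``isotopic $\Rightarrow$ isomorphic'' direction is complete and correct, and your analysis of a frame-preserving isomorphism correctly extracts the isotopy identity $H(m\circ_t x)=F(m)\circ_s G(x)$ together with the additivity (hence $\F_p$-linearity) of $F$, $G$, $H$.

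The gap is exactly where you locate it, and it is the entire content of the hard direction: you assert, but do not prove, that any two semifield frames of a single plane coordinatize isotopic semifields, and the reduction of an arbitrary isomorphism to a frame-preserving one rests entirely on that claim. Two things must be supplied. First, that the image of the standard frame under an arbitrary isomorphism is again a frame whose coordinate ternary ring is a semifield; this holds because an isomorphism transports the $(\ell_\infty,\ell_\infty)$-transitivity (translations) and the $(\infty,\ell_\infty)$-transitivity (shears) that characterize semifield coordinatization, but it needs to be said. Second, and more substantially, the change-of-frame computation itself: one must show that the translations and shears act transitively on the admissible choices of origin and of the points $\infty_0$ and $\infty$ (for a non-Desarguesian semifield plane the flag $(\infty,\ell_\infty)$ is fixed by every collineation, since it is the unique flag of full transitivity, which pins down part of the frame for free; for a Desarguesian plane every coordinatizing semifield is a field and the claim is vacuous), and then that the remaining freedom --- the choice of unit point --- alters the multiplication precisely by an isotopy. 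Until that computation is carried out, your argument establishes the easy implication and only a special case of the converse.
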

\par
Given a semifield with multiplication $\circ$, another semifield with multiplication $\star$ can be obtained by changing the order of the multiplication, viz $x \star y := y\circ x$. There is a more subtle possibility to obtain more semifields from just one semifield via the Knuth orbit. We refer the reader to \cite{Knu1965} for details.
\par
We conclude this section by showing how a spread can be obtained from a semifield. Let $(\S,+,\,\circ\,)$ be a semifield of characteristic $p$. Let $e_1,\dots,e_n$ be a basis of $\S$, viewed as a vector space over the field with $p$ elements. The mappings $e_i: \S\to \S$ defined by $e_i(v):=e_i\circ v$ are linear and bijective. Moreover, the vector space $W$ of linear mappings on $\S$ spanned by $e_1,\dots,e_n$ has dimension $n$ and consists of $p^n-1$ bijective linear mappings and the zero mapping. As $F$ ranges over $W$, the sets $\{(x,F(x))\: : \: x\in \S\}$ form a set of $p^n$ subspaces of $\S\times \S$, each of dimension $n$. These subspaces together with $\{(0,x)\: :\: x\in \S\}$ form a spread, namely a set of $p^n+1$ $n$-dimensional subspaces of the $2n$-dimensional space $\S\times\S$ intersecting pairwise trivially. A spread corresponding to a semifield is called \emph{semifield spread}.

%%%%%%%%%%%%%%%%%%%%%%%%%%%%%%%%%%%%%%%%%%%%%%%%%%%%%%%%%%%%%%%%%%%%%%%%%%%%%

\section{Relative difference sets}
\label{sec:rds}

We now describe a concept, which seems at a first glance unrelated to semifields.
\begin{definition}
Let $G$ be a group of order $mn$ containing a subgroup $N$ of order $n$. A $k$-subset $R$ of $G$ is called a \emph{relative $(m,n,k,\lambda)$ difference set} (relative to $N$) if the list of nonzero differences $r-r'$ with $r,r'\in R$ contains all elements in $G\setminus N$ exactly $\lambda$ times and no element in $N$. The subgroup $N$ is called the \emph{forbidden subgroup}.
\end{definition}
\par
Sometimes we simply say that $R$ is a difference set relative to $N$. In case that $N=\{0\}$, the definition of a relative difference set coincides with the definition of a difference set in the usual sense. We refer the reader to \cite{BetJunLen1999}, \cite{Pot1995}, \cite{Pot1996} for background.
\begin{example}
(a) The set $\{1,2,4\}\subseteq \Z_8$ is a relative $(4,2,3,1)$ difference set. The forbidden subgroup is the unique subgroup $4\Z_8=\{0,4\}$ of order $2$ in $\Z_8$.
\par
(b) The set $\{(0,0), (0,1), (1,3), (3,0)\}$ is a relative
$(4,4,4,1)$  difference set in $\Z_4\times \Z_4$  with forbidden subgroup 
$2\Z_4\times 2\Z_4$.
\par
(c) The set $\{(0,0), (1,1), (2,1)\}$ is a relative 
difference set in $\Z_3\times \Z_3$ with forbidden subgroup
$\{0\}\times \Z_3$.
\end{example}
\par
We now show how to construct a divisible design from a relative difference set.
\begin{construction}
\label{con:design_from_difference_set}
Let $R$ be a relative $(m,n,k,\lambda)$ difference set in a group $G$ with forbidden subgroup $N$. We construct a divisible $(m,n,k,\lambda)$ design as follows. The points are the elements of $G$ and the lines are the translates $R+g$ with $g\in G$. The point classes are the (right) cosets of $N$. Similarly, the line classes are induced by the (right) cosets of $N$. The group $G$ itself acts via right translation $\tau_g:x\mapsto x+g$ regularly (=sharply transitively) on the points as well as on the lines.
\par
Conversely, a divisible designs that admits an automorphism
group acting regularly on points and lines can be described by a relative difference set. This is a well known fact in 
design theory~\cite[Section~VI.10]{BetJunLen1999}.
\end{construction}
\par
Note that translation $\tau_x$ by elements  $x\in N$ fixes the point class $N$, but not necessarily the right cosets of $N$. The right cosets of $N$ are fixed if and only if $N$ is a normal subgroup. Sometimes normality of $N$ is part of the definition of a relative difference set.
\par
In view of Constructions~\ref{con:plane_from_design} and~\ref{con:design_from_difference_set}, a relative difference set with parameters $(n,n,n,1)$ gives rise to a projective plane of order $n$. It is remarkable that the prime power conjecture (see Question~\ref{que:prime_power_conjecture}) has been solved in the special case that the plane can be described by such a relative difference set in an abelian group. This was proved by Ganley~\cite{Gan1976} for even $n$ (see also \cite{Jun1987}) and by Blokhuis, Jungnickel, and Schmidt~\cite{BloJunSch2002} for odd $n$.
\begin{theorem}[{\cite{Gan1976},~\cite{BloJunSch2002}}]
\label{thm:ppc}
Let $R$ be a relative $(n,n,n,1)$ difference set in an abelian group~$G$. If $n$ is even, then $n=2^m$ (for some $m$) and $G$ is isomorphic to $\Z_4^{\;m}$ and the forbidden subgroup is isomorphic to $\Z_2^{\;m}$. If $n$ is odd, then $n=p^m$ where $p$ is a prime and $G$ contains an elementary abelian subgroup of order $p^{m+1}$. 
\end{theorem}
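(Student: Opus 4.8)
The plan is to encode $R$ as an element of the integral group ring $\Z[G]$ and then extract arithmetic from the characters of $G$. Writing $R=\sum_{r\in R}r$ and $R^{(-1)}=\sum_{r\in R}(-r)$, and letting $G$ and $N$ stand also for the sums of all their elements, the defining property of a relative $(n,n,n,1)$ difference set is equivalent to the single identity
\[
R\,R^{(-1)}=n+(G-N)\qquad\text{in }\Z[G].
\]
Applying an arbitrary character $\chi$ of $G$ and using $\chi(G)=0$ for $\chi\neq 1$ together with $\chi(N)=\abs N$ or $\chi(N)=0$ according as $\chi$ is trivial or nontrivial on $N$, I get $\chi(R)=n$ for $\chi=1$, next $\chi(R)=0$ for nontrivial $\chi$ trivial on $N$, and finally
\[
\abs{\chi(R)}^2=n\qquad\text{for every $\chi$ nontrivial on $N$.}
\]
The middle case already forces $R$ to be a transversal of $N$ (it meets each coset of $N$ exactly once), so the whole theorem reduces to exploiting the constraint $\abs{\chi(R)}^2=n$, in which each $\chi(R)$ is an algebraic integer of $\mathbb{Q}(\zeta_e)$ with $e=\exp(G)$.

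First I would prove that $n$ is a prime power. Fixing a prime $p\mid n$, I would analyse the factorization of the ideal $(\chi(R))$ over the primes of $\mathbb{Q}(\zeta_e)$ lying above a hypothetical second prime divisor $q\neq p$ of $n$. Since $\chi(R)\,\overline{\chi(R)}=n$ and complex conjugation permutes the primes above $q$, a self-conjugacy argument (available when some power of $q$ is $\equiv-1$ modulo the order of $\chi$) forces $\chi(R)$ and $\overline{\chi(R)}$ to have equal $q$-adic valuations, so a fixed positive power of $q$ divides $\chi(R)$ for all relevant $\chi$. Feeding this back through Fourier inversion, $a_g=\abs G^{-1}\sum_\chi\chi(R)\,\overline{\chi(g)}$ — equivalently, projecting $R$ onto the $p$-part of $G$ — makes the $\{0,1\}$-valued coefficients of a projection of $R$ divisible by a power of $q$ that is far too large, a contradiction. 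The cases where self-conjugacy fails are precisely those needing the field-descent method, and this number-theoretic core is the main obstacle; everything surrounding it is comparatively formal.

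For even $n$ I would then nail down the structure. That $G$ is not elementary abelian follows from a clean reduction modulo $2$: if every element had order $2$ then $-r=r$, so $R^{(-1)}=R$, and in $\F_2[G]$ the product $R\,R^{(-1)}$ collapses (the off-diagonal terms cancel in pairs and each diagonal term is the identity) to $n$ times the identity, which is $0$ since $n$ is even, whereas the right-hand side reduces to $\sum_{g\in G\setminus N}g\neq 0$. A field-descent estimate shows moreover that $\exp(G)\le 4$, the prime $2$ being totally ramified in $\mathbb{Q}(\zeta_{2^t})$, so that $\exp(G)=2^t$ with $t\ge 3$ would force $\chi(R)$ to be divisible by a power of the ramified prime incompatible with $\abs{\chi(R)}^2=2^m$. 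Hence $\exp(G)=4$. A refinement of the character computation then shows that every character of order dividing $2$ is trivial on $N$, i.e. $N\subseteq 2G$; writing the exponent-$4$ group as $G\cong\Z_4^{\,a}\times\Z_2^{\,b}$ gives $\abs{2G}=2^a$, so $2^m=\abs N\le 2^a$ while $2a+b=2m$, which forces $a=m$ and $b=0$. Therefore $n=2^m$, $G\cong\Z_4^{\,m}$, and $N=2G\cong\Z_2^{\,m}$.

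For odd $n$ the same philosophy applies, but yields less. Once $n=p^m$ is in hand, a parallel and more delicate study of the $p$-adic valuations of the values $\chi(R)$ — the field-descent input of Blokhuis, Jungnickel and Schmidt — controls the $p$-torsion of $G$: one shows that the elementary abelian subgroup $G[p]=\{g\in G:pg=0\}$ has order at least $p^{m+1}$, which is exactly the asserted elementary abelian subgroup. I expect the main difficulty throughout to be the same: solving $\abs{\chi(R)}^2=n$ in cyclotomic integers with $\{0,1\}$-preimages, for which the self-conjugacy and field-descent machinery is indispensable. The reason the odd case delivers only partial structural data, in contrast to the rigid $\Z_4^{\,m}$ obtained for $p=2$, is that pinning down the full isomorphism type of $G$ for odd $n$ lies deeper than — and is not required for — the prime power conclusion.
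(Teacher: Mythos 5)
The paper does not actually prove Theorem~\ref{thm:ppc}: it is imported verbatim from Ganley (even $n$) and Blokhuis--Jungnickel--Schmidt (odd $n$), so there is no internal proof to compare against. Judged on its own, your character-theoretic frame is correct and standard: the group-ring identity $RR^{(-1)}=n+(G-N)$, the three character cases, and the transversal property all check out, and your $\F_2[G]$ computation ruling out elementary abelian $G$ for even $n$ is sound. But the two statements that constitute the actual content of the theorem --- that $n$ is a prime power, and the rank bound $p^{m+1}$ for odd $n$ --- are not proved in your sketch; they are deferred to ``self-conjugacy where available, field descent otherwise.'' That is a genuine gap, not a routine one. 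Self-conjugacy ($q^j\equiv-1$ modulo the order of $\chi$) fails for infinitely many pairs $(q,e)$, and the field-descent method yields \emph{bounds} on cyclotomic solutions of $\abs{X}^2=n$ rather than a clean resolution; indeed the open status of the prime power conjecture for ordinary planar difference sets shows this machinery does not by itself settle such questions. The reason the 2002 paper exists is precisely that its authors found an argument exploiting the specific $(n,n,n,1)$ parameters rather than running the generic program you describe. A second, more technical error sits inside your contradiction step: Fourier inversion $a_g=\abs{G}^{-1}\sum_\chi\chi(R)\overline{\chi(g)}$ does \emph{not} transfer divisibility of all $\chi(R)$ by $q^c$ to divisibility of the coefficients when $q$ divides $\abs{G}$ --- which it does here, since $q\mid n$ and $\abs{G}=n^2$ (a subgroup of order $q$ is already a counterexample). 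One needs Ma's lemma, which produces a decomposition $A=q^cX+PY$ and only then a coefficient bound.

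In the even case your endgame is close to correct but one link is under-justified. You derive $N\subseteq 2G$ from ``every character of order dividing $2$ is trivial on $N$,'' arguing that a real character $\chi$ nontrivial on $N$ gives $\chi(R)^2=2^m$; this is a contradiction only when $m$ is odd, since for even $m$ the value $\pm 2^{m/2}$ is a perfectly good integer. The clean substitute is dual and elementary: if $g\in G\setminus N$ has order $2$, then $g=r-r'$ for a unique pair in $R$, and $-g=g$ forces $r=r'$ by uniqueness ($\lambda=1$), hence $g=0$; so \emph{all} involutions of $G$ lie in $N$, giving $a+b\le m$ for $G\cong\Z_4^{\,a}\times\Z_2^{\,b}$, which together with $2a+b=2m$ yields $a=m$, $b=0$, and $N=2G$ exactly as you conclude. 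I would recommend replacing the order-$2$ character step by this observation, being explicit that the exponent bound $\exp(G)\le 4$ requires Ma's lemma plus the $\{0,1\}$ coefficient bound (total ramification of $2$ alone does not finish it), and being honest that the prime-power and odd-rank assertions are being cited rather than proved.
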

\par
The following problem remains unsolved.
\begin{question}
Let $q$ be an odd prime power and let $R$ be a relative $(q,q,q,1)$ difference set in an abelian group $G$. Is it true that $G$ must be elementary abelian?
\end{question}
\par
Another question is which planes can be obtained from relative $(n,n,n,1)$ difference sets. We shall see in the next section that with the exception of a single family, all known relative $(n,n,n,1)$ difference sets give rise to semifield planes. Moreover, all known examples of relative difference sets with parameters $(p^2,p^2,p^2,1)$ or $(p,p,p,1)$ describe Desarguesian planes (namely semifield planes constructed from finite fields). This is not a surprise in the latter case since, as mentioned in connection with Question~\ref{que:prime_order_conjecture}, it is conjectured that there is only one plane of prime order (which is necessarily Desarguesian). For planes coming from relative difference sets, this has been proved independently in~\cite{Glu1990},~\cite{Hir1990},~\cite{RonSzo1989}.
\begin{theorem}
A projective plane described by a relative $(p,p,p,1)$ difference set with $p$ prime is Desarguesian. Moreover, the relative difference set is unique up to equivalence. 
\end{theorem}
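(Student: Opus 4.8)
The plan is to translate the statement into the language of planar functions and then to prove that, over a prime field, every planar function is equivalent to the squaring map. Since a relative $(p,p,p,1)$ difference set has $n=p$ prime, Theorem~\ref{thm:ppc} pins down the ambient group. For $p=2$ the associated plane has order~$2$, i.e.\ it is the Fano plane, which is well known to be unique and Desarguesian, so I treat this case separately and assume $p$ odd henceforth. In the odd case Theorem~\ref{thm:ppc} forces $G$ to contain an elementary abelian group of order $p^2$; as $\abs{G}=p^2$ this means $G\cong\Z_p\times\Z_p$ and the forbidden subgroup $N$ has order~$p$. Applying an automorphism of $G$ I may assume $N=\{0\}\times\F_p$. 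Since no nonzero difference $r-r'$ lies in $N$, the $p$ elements of $R$ occupy distinct cosets of $N$, so $R=\{(x,f(x)):x\in\F_p\}$ is the graph of a function $f\colon\F_p\to\F_p$, and the defining property of $R$ says precisely that $x\mapsto f(x+a)-f(x)$ is a bijection of $\F_p$ for every $a\neq 0$; that is, $f$ is a \emph{planar function}. Under this dictionary, equivalence of relative difference sets corresponds to the natural equivalence of planar functions (composition with invertible affine maps together with addition of a linear function), and the Desarguesian plane corresponds to $f(x)=x^2$. Hence both assertions reduce to the single claim that every planar function on $\F_p$ is equivalent to $x\mapsto x^2$.

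Next I record the structural constraints planarity imposes on $f$, viewed as a polynomial of degree $d$ with $2\le d\le p-1$. Writing $g_s(x)=f(x)-sx$ for a slope $s\in\F_p$, the difference-set property states that for every $(a,b)$ with $a\neq 0$ there is a unique ordered pair $(x,x')$ with $x-x'=a$ and $f(x)-f(x')=b$. Specializing to $b=sa$ shows that, for each fixed $s$, there are exactly $p-1$ ordered pairs $x\neq x'$ with $g_s(x)=g_s(x')$, so if $n_{s,v}$ denotes the size of the fibre $g_s^{-1}(v)$ then
\[
\sum_{v}\binom{n_{s,v}}{2}=\frac{p-1}{2}\qquad\text{for every }s\in\F_p .
\]
This is exactly the regularity of the two-to-one map $x\mapsto x^2-sx$, a first sign that $f$ ought to be quadratic. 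In Fourier-analytic terms, planarity is equivalent to the \emph{bent} condition that every nontrivial additive-character sum $\sum_{x}\psi(c\,f(x)-s x)$ (with $c\neq 0$) has absolute value $\sqrt p$, which one verifies by expanding $\bigabs{\sum_x\psi(c f(x)-sx)}^2$ and using that $f(x+a)-f(x)$ is a bijection. These are strong necessary conditions, but on their own they do not yet force $d=2$.

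The heart of the proof is therefore to show $d=2$, and this is where the hypothesis that $p$ is \emph{prime} is essential: over a proper extension field there are planar functions of higher degree (for instance the Dembowski--Ostrom and Coulter--Matthews examples), built from the nontrivial Frobenius, so any argument must genuinely use that $\F_p$ has no such automorphisms. The approach is to apply Hermite's permutation criterion to the polynomials $\Delta_a f(x)=f(x+a)-f(x)$, which have degree $d-1$ and are permutations for all $a\neq 0$. For suitable exponents $k\le p-2$ one examines the coefficient of $x^{p-1}$ in the reduction of $(\Delta_a f)^k$ modulo $x^p-x$: the leading contribution $(da)^k x^{k(d-1)}$ reduces to degree $p-1$ precisely when $(p-1)\mid k(d-1)$, and the required vanishing of this coefficient for \emph{all} $a\neq 0$ becomes a system of polynomial identities in~$a$ that one wants to show is incompatible with $d>2$. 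Two things make this delicate and constitute the main obstacle: the interference between the leading term and the lower-degree terms of $(\Delta_a f)^k$ (possible cancellation after reduction), and the recalcitrant case $\gcd(d-1,p-1)=1$, in which no single critical exponent lies in Hermite's range. Handling these is exactly the content of the independent arguments of Gluck, Hiramine, and R\'onyai--Sz\H{o}nyi, the last via Rédei's theory of lacunary polynomials and the set of directions determined by the graph of $f$; it is here that primality of $p$ enters decisively.

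Once $d=2$ is established the conclusion is immediate: a planar polynomial of degree~$2$ has the form $f(x)=\alpha x^2+\beta x+\gamma$ with $\alpha\neq 0$, which is equivalent to $x\mapsto x^2$. By the dictionary of the first paragraph this shows simultaneously that the relative difference set is unique up to equivalence and that the plane it describes is the one built from $x^2$, namely the Desarguesian plane $\mathrm{PG}(2,p)$.
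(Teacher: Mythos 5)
The paper offers no proof of this theorem: it is stated as a known result with pointers to Gluck, Hiramine, and R\'onyai--Sz\H{o}nyi, so there is no internal argument to compare against. Your reduction is correct and is the one the paper's framework suggests: Theorem~\ref{thm:ppc} forces $G\cong\Z_p\times\Z_p$ for odd $p$ (and $\Z_4$ for $p=2$, where the plane is the Fano plane), the difference set is then the graph of a planar function $f\colon\F_p\to\F_p$, equivalence of relative difference sets becomes affine equivalence of planar functions, and both assertions follow once one knows that every planar function on a prime field is a quadratic polynomial. The fibre count $\sum_v\binom{n_{s,v}}{2}=(p-1)/2$ and the bent-character reformulation are also correct necessary conditions.

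The gap is that the one statement carrying all the weight --- that a planar $f$ on $\F_p$ has degree $2$ --- is never proved. You set up Hermite's criterion for the difference polynomials $\Delta_af$, correctly identify the two obstructions (cancellation between the leading term and lower-order terms after reduction modulo $x^p-x$, and the case $\gcd(d-1,p-1)=1$, where no critical exponent falls in Hermite's range), and then state that resolving them ``is exactly the content of'' the three cited papers. That is an accurate description of where the difficulty lives and of where primality enters, but it is a citation, not an argument; none of the necessary conditions you derive is strong enough on its own to force $d=2$, as you yourself acknowledge. In effect your proposal is a correct and usefully expanded version of the paper's own treatment (which is just the citation), but as a standalone proof it is incomplete at precisely the decisive step.
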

\par
It is an open question as to whether all projective planes described by relative $(p^2,p^2,p^2,1)$ difference sets (with $p$ prime) are Desarguesian. Since a semifield of order $p^2$ is necessarily a finite field, a putative counterexample cannot be a semifield plane.
\par
In connection with relative difference sets, there are two concepts of equivalence. We call two relative difference sets $R$ and $R'$ in a group $G$ \emph{equivalent} if there is a group automorphism $\varphi$ of $G$ and a group element $g\in G$ such that $R'=\varphi(R)+g$. It is readily verified that equivalent relative difference sets describe isomorphic divisible designs. The converse is in general not true. We call two relative difference sets \emph{isomorphic} if their corresponding divisible designs are isomorphic. Of course, two isomorphic relative difference sets  are equivalent. In general, determining whether two relative difference sets are isomorphic is much more difficult than determining whether they are equivalent.
\par
Relative difference sets have the following nice property.
\begin{proposition}
\label{pro:rds_projection}
Let $R$ be an $(m,n,k,\lambda)$ difference set in an abelian group $G$ relative to $N$. Let $U$ be a subgroup of $N$ of order $u$ and let $\varphi$ denote the canonical epimorphism $G\to G/U$. Then $\varphi(R)$ is a difference set with parameters $(m,n/u,k,\lambda u)$ relative to $N/U$. 
\end{proposition}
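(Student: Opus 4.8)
The plan is to argue directly from the definition of a relative difference set, tracking separately what the projection $\varphi$ does to the size of $R$ and to the multiset of its nonzero differences. Since the index of $N/U$ in $G/U$ equals the index of $N$ in $G$, the first parameter $m$ is automatically unchanged, and the order of the forbidden subgroup becomes $n/u$; so the only substantive points are that $\abs{\varphi(R)}=k$ and that the difference parameter becomes $\lambda u$. I would begin with the size. If $r,r'\in R$ satisfy $\varphi(r)=\varphi(r')$, then $r-r'\in U\subseteq N$, and because $R$ has no nonzero difference lying in $N$ this forces $r=r'$. Hence $\varphi$ restricts to a bijection from $R$ onto $\varphi(R)$, giving $\abs{\varphi(R)}=k$. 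This injectivity is exactly where the hypothesis $U\subseteq N$ does its work for the cardinality count, and it also lets me transport counting questions about $\varphi(R)$ back to $R$.

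Next I would fix a nonzero $\delta\in G/U$ and count the ordered pairs $(s,s')\in\varphi(R)\times\varphi(R)$ with $s\ne s'$ and $s-s'=\delta$. Using the bijection from the first step, this equals the number of ordered pairs $(r,r')\in R\times R$ with $r\ne r'$ and $\varphi(r-r')=\delta$, that is, with $r-r'$ lying in the fiber $\varphi^{-1}(\delta)$. This fiber is a single coset of $U$ in $G$, so it has exactly $u$ elements, none of them equal to $0$ since $\delta\ne 0$. The idea is then simply to sum the difference-multiplicities of the original set $R$ over the $u$ elements of this coset.

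The computation now splits according to whether $\delta$ lies in $N/U$. Because $U\subseteq N$, one has $\varphi^{-1}(N/U)=N+U=N$, so the fiber $\varphi^{-1}(\delta)$ is contained in $N$ precisely when $\delta\in N/U$. If $\delta\notin N/U$, all $u$ fiber elements lie in $G\setminus N$, each occurring exactly $\lambda$ times among the nonzero differences of $R$, for a total of $\lambda u$; this yields the claimed parameter. If instead $\delta\in N/U$ is nonzero, all $u$ fiber elements are nonzero elements of $N$, each occurring $0$ times, so $\delta$ is never realized as a nonzero difference of $\varphi(R)$, confirming that $N/U$ is the forbidden subgroup. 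I do not expect a genuine obstacle: the whole argument is a fibration of the difference multiset along $\varphi$. The only point requiring care is keeping the distinctness condition $r\ne r'$ in step with $s\ne s'$ and checking that $0$ never lies in the fibers under consideration, both of which follow from $\delta\ne 0$ together with the injectivity established first.
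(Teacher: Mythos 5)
The paper states this proposition without proof, so there is nothing to compare against; your argument is the standard one and it is correct. The two key points are exactly the ones you isolate: injectivity of $\varphi$ on $R$ (which uses $U\subseteq N$ and the absence of nonzero differences in $N$) to get $\abs{\varphi(R)}=k$, and the fibration of the difference multiset over the $u$-element cosets $\varphi^{-1}(\delta)$, split according to whether the fiber lies in $N$ or in $G\setminus N$. No gaps.
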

\par
With the notation as in Proposition~\ref{pro:rds_projection}, suppose that $N$ can be written as a direct product of abelian groups $N_1,\dots,N_s$. Then we may take a set of coset representatives $g_1,\dots,g_m$ of $N$ in $G$ and describe the new relative difference set using a mapping $g_i\mapsto n_i$ from the set of coset representatives into the group $N$. The element $n_i$ can be written as a product $\prod_{j=1}^s n_i^{(j)}$ with $n_i^{(j)}\in N_j$. We may view the mappings $g_i\mapsto n_i^{(j)}$ as the component functions of the relative difference set. Note that there is much freedom in this construction since we may change the set of coset representatives arbitrarily. Hence it is not the mapping that is important here but the mapping together with the choice of coset representatives. In case that the relative difference set is splitting, there is a canonical set of coset representatives, namely the elements in a group 
theoretic complement of $N$. 

%%%%%%%%%%%%%%%%%%%%%%%%%%%%%%%%%%%%%%%%%%%%%%%%%%%%%%%%%%%%%%%%%%%%%%%%%%%%%

\section{Relative difference sets and semifields}
\label{sec:connections}

We have already seen that a projective plane of order $n$ can be constructed from a relative $(n,n,n,1)$ difference set and also from a semifield of order $n$. The following fundamental theorem shows that a semifield of order $n$ also gives rise to a relative $(n,n,n,1)$ difference set (see \cite{Jun1982} and \cite{GhiJun2003}, for example). 
\begin{theorem}
\label{thm:fund}
Let $(\S,+,\,\circ\,)$ be a semifield of order $n$. Then the set $R=\{(x,x\circ x): x\in\S \}$ is a relative $(n,n,n,1)$ difference set in $G=\{(x,y)\: : \: x,y\in \S \}$, where the group operation on $G$ is given by 
\[
(x,y)\star (x',y') = ( x+x', y+y'+x\circ x').
\]
The forbidden subgroup is $N=\{(0,y): y\in \S\}$.
\end{theorem}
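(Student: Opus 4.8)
The plan is to verify the group axioms for $(G,\star)$ together with the claimed parameters, and then to reduce the difference-set property to the single fact that right multiplication by a fixed nonzero semifield element is a bijection. First I would check that $(G,\star)$ is a group of order $n^2$. The element $(0,0)$ is a two-sided identity (using $x\circ 0 = 0\circ x = 0$, which follows from (S2)); a short computation gives the two-sided inverse
\[
(x,y)^{-1} = (-x,\,-y + x\circ x);
\]
and associativity follows from expanding both bracketings, which reduce via the two distributive laws (S2) to the common value $(x+x'+x'',\; y+y'+y'' + x\circ x' + x\circ x'' + x'\circ x'')$. Since $N=\{(0,y):y\in\S\}$ is visibly closed under $\star$ and inversion it is a subgroup of order $n$, and the map $x\mapsto(x,x\circ x)$ is injective, so $|R|=n$. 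Thus $|G|=n^2$, $|N|=n$, $|R|=n$, and it remains only to show that $R$ realizes each element of $G\setminus N$ exactly once as a difference and no element of $N$ at all, i.e. that $\lambda = 1$.

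The computational heart of the argument is the evaluation of a difference $r\star(r')^{-1}$ for $r=(x,x\circ x)$ and $r'=(x',x'\circ x')$ in $R$. The key simplification is that, because $r'$ lies on the graph defining $R$, its inverse collapses to $(r')^{-1}=(-x',0)$. Carrying out the product and applying distributivity to telescope the second coordinate, I would obtain
\[
r\star(r')^{-1} = \bigl(x-x',\; x\circ(x-x')\bigr).
\]
Writing $a:=x-x'$, every difference therefore has the form $(a,\,x\circ a)$.

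From this, both halves of the difference-set condition fall out. The first coordinate $a$ vanishes precisely when $x=x'$, that is when $r=r'$; hence for $r\neq r'$ the difference has nonzero first coordinate and so lies in $G\setminus N$, which shows that no nonzero difference meets $N$. For the count, fix a target $(a,b)$ with $a\neq 0$. A representation $(a,b)=(x-x',\,x\circ a)$ forces $x$ to satisfy $x\circ a=b$; since $a\neq 0$, right multiplication by $a$ is an automorphism of $(\S,+)$ (as recorded after Definition~\ref{def:semifield}, this is exactly where (S3) enters), so $x$ is uniquely determined, and then $x'=x-a$ is forced and differs from $x$. Hence each element of $G\setminus N$ arises from exactly one ordered pair $(r,r')$, giving $\lambda=1$; a consistency check matches the $n(n-1)$ ordered pairs with distinct entries against the $n^2-n$ elements of $G\setminus N$.

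I expect no serious obstacle, as the argument is essentially careful bookkeeping. The two points demanding attention are the associativity verification, where one must track the cross terms produced by both distributive laws, and the observation that $G$ is in general nonabelian (it is abelian exactly when $\S$ is commutative), so that the difference must be formed as $r\star(r')^{-1}$ in the group rather than subtracted coordinatewise. Everything substantive is funneled into the single structural fact that the maps $x\mapsto x\circ a$ are bijective for $a\neq 0$.
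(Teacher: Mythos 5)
Your proposal is correct and follows essentially the same route as the paper's proof: compute the inverse $(x,y)^{-1}=(-x,-y+x\circ x)$, observe that a difference of two elements of $R$ collapses to $\bigl(x-x',\,x\circ(x-x')\bigr)$, and conclude from the bijectivity of $x\mapsto x\circ a$ for $a\neq 0$. You simply spell out the group axioms and the counting step that the paper leaves as ``straightforward.''
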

\begin{proof}
It is straightforward to check that $G$ is a group. The inverse element of $(x,y)\in G$ is
\[
(x,y)^{-1}=(-x, -y+x\circ x).
\]
The differences that we can form with elements from $R$ are
\begin{align*}
(a,a\circ a)\star (b,b\circ b)^{-1}&=(a,a\circ a)\star(-b,0)\\
&=(a-b,a\circ a-a\circ b)\\
&=(a-b,a\circ(a-b)),
\end{align*}
whose nonzero values cover all elements in $G\setminus N$ exactly once.
\end{proof}
\par
\begin{remark}
The group $G$ in Theorem~\ref{thm:fund} is commutative if and only if the semifield is commutative. The order of the elements in $G\setminus N$ equals~$p$ if~$p$ is odd and equals~$4$ if $p=2$. In both cases, the forbidden subgroup $N$ is elementary abelian. 
\end{remark}
\par
We may also ask whether the converse of Theorem \ref{thm:fund} is true, namely is it 
possible to construct a semifield starting from a
relative difference set with parameters $(n,n,n,1)$? A partial answer to this question can be given in the case that the ambient group $G$ is either $\Z_4^{\;m}$ or $\Z_p^{\;m}$ for an odd prime $p$.
\par
We first consider the case that $G$ is an abelian $p$-group and $p$ is odd. We represent $G$ as the additive group of $\F_{p^m}\times \F_{p^m}$. It is not hard to verify that every relative $(p^m,p^m,p^m,1)$ difference set in $G$ can be written as
\[
R=\{(x,f(x)):x\in \F_{p^m}\}
\]
for some function $f:\F_{p^m}\to \F_{p^m}$, in which case the forbidden subgroup is $\{0\}\times \F_{p^m}$. The set $R$ is a relative difference set if and only if the equation $f(x+a)-f(x)=b$ has a unique solution $x$ for all $a,b\in\F_{p^m}$ with $a\ne 0$. Functions with this property are called \emph{planar}. Here we give a definition, which is valid for all groups.
\begin{definition}
\label{def:planar}
Let $H$ and $N$ be two groups. A function $f:H\to N$ is \emph{planar} if the mapping $\delta_a:H\to N$ defined by $\delta_a(x)=f(x+a)-f(x)$ is bijective for all nonzero $a\in H$. 
\end{definition}
\par
The following result is readily verified.
\begin{proposition}
\label{pro:rds_from_planar}
Let $H$ and $N$ be two groups of order $n$ and let $R$ be a subset of $H\times N$. Then $R$ is an $(n,n,n,1)$ difference set in $H\times N$ relative to $\{0\}\times N$ if and only if there is a planar function $f:H\to N$ such that
\[
R=\{(x,f(x))\: :\: x\in H\}.
\]
\end{proposition}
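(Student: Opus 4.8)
The statement is a biconditional, so the plan is to prove the two implications separately, in each case translating between the counting condition that defines a relative $(n,n,n,1)$ difference set and the bijectivity condition that defines planarity. Throughout, the key computation is that a generic difference of two elements of a graph $\{(x,f(x))\}$ has the form $(x-x',\,f(x)-f(x'))$, and the whole argument amounts to reading this identity in two directions.

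For the ``if'' direction I would start with a planar $f$ and set $R=\{(x,f(x)):x\in H\}$. Since $f$ is a function, $\abs{R}=\abs{H}=n$, which gives the required size $k=n$. A difference $(x,f(x))-(x',f(x'))$ lies in $N=\{0\}\times N$ only when $x=x'$, and then the two elements coincide, so the difference is trivial; hence no nonzero element of $N$ occurs. When $x\ne x'$, I would write $a=x-x'\ne 0$, substitute $y=x'$, and observe that the difference equals $(a,\delta_a(y))$ with $\delta_a(y)=f(y+a)-f(y)$. For each fixed nonzero $a$, planarity says $\delta_a$ is a bijection onto $N$, so each $(a,b)\in G\setminus N$ is hit exactly once as $y$ ranges over $H$. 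This is precisely the $(n,n,n,1)$ relative difference set condition.

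For the ``only if'' direction I would first show that $R$ is the graph of a function. If two elements of $R$ shared the same first coordinate, their difference would be a nonzero element of $\{0\}\times N$, contradicting that no element of $N$ occurs among the differences; hence each first coordinate appears at most once. Since $\abs{R}=n=\abs{H}$, each first coordinate appears exactly once, so $R=\{(x,f(x)):x\in H\}$ for a well-defined $f:H\to N$. To see that $f$ is planar, I would reverse the computation above: the condition that each $(a,b)$ with $a\ne 0$ is covered exactly once says exactly that $\delta_a(y)=b$ has a unique solution $y$ for every $b$, i.e.\ $\delta_a$ is a bijection for each nonzero $a$.

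The argument is a direct unwinding of the definitions, so I do not expect a genuine obstacle. The only points requiring care are the index shift relating the difference $f(x)-f(x')$ to the map $\delta_a(y)=f(y+a)-f(y)$, so that bijectivity is asserted in the correct variable, and---if one does not wish to assume $H$ and $N$ abelian---tracking the order of the group operations, since the definition of planar uses $+$ on $H$ while the count of differences is taken in the product group. In the abelian setting relevant to the rest of the paper these subtleties disappear.
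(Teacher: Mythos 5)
Your argument is correct and complete: both directions are handled by the right change of variables ($a=x-x'$, $y=x'$), and the ``only if'' direction properly establishes first that $R$ is the graph of a function before reading off planarity. The paper states this proposition as ``readily verified'' and supplies no proof, and your direct unwinding of the two definitions is exactly the intended verification.
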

\par
In view of Proposition~\ref{pro:rds_from_planar} and Constructions~\ref{con:plane_from_design} and~\ref{con:design_from_difference_set}, every planar function corresponds to a unique projective plane. A partial answer to the question of which relative difference sets describe semifield planes can be given in terms of planar functions, for which we need one more definition. Note that every mapping $f:\F_q\to \F_q$ is a polynomial mapping $f(x)=\sum_{i=0}^{q-1} a_ix^i$ for some uniquely determined $a_0,\dots,a_{q-1}\in \F_q$.
\begin{definition}
Let $p$ be a prime. A polynomial $f\in\F_{p^m}[x]$ (and also the corresponding mapping) is called \emph{Dembowski-Ostrom} if, for some $a_{i,j}\in\F_{p^m}$, $$f(x)=\sum_{0\le i\le j<m} a_{i,j}x^{p^i+p^j}$$ for odd $p$ or $$f(x)=\sum_{0\le i<j<m} a_{i,j}x^{p^i+p^j}$$ for $p=2$. The polynomial $f$ is \emph{affine} if $f(x)=\sum_{i=0}^{m-1}c_ix^{p^i}+d$ for some $c_i,d\in \F_{p^m}$. An \emph{affine Dembowski-Ostrom polynomial} is a sum of a Dembowski-Ostrom polynomial and an affine polynomial.
\end{definition}
\par
Our next result, which is essentially~\cite[Theorem 3.3]{CouHen2008}, gives the promised partial answer to the question of which relative difference sets
describe semifield planes.
\begin{theorem}
\label{thm:semifield_DO_odd}
Let $q$ be an odd prime power and let $f:\F_q\to \F_q$ be a mapping. If $f$ is an affine Dembowski-Ostrom planar function, then the corresponding projective plane is a semifield plane, where the semifield is isotopic to the (commutative) pre-semifield $(\F_q,+,\,\circ\,)$ whose multiplication is given by $x\circ y=f(x+y)-f(x)-f(y)+f(0)$. Conversely, if a projective plane is a semifield plane corresponding to a commutative semifield $(\S,+,\,\star \,)$, then $f(x)=x\star x$ is a Dembowski-Ostrom planar function.
\end{theorem}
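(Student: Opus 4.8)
The plan is to treat the two implications separately, passing in both cases through the symmetric bilinear form attached to a Dembowski--Ostrom polynomial. For the forward implication, I would first record that, because $f$ is affine Dembowski--Ostrom, the map $x\circ y=f(x+y)-f(x)-f(y)+f(0)$ equals the symmetric $\F_p$-bilinear form
\[
x\circ y=\sum a_{i,j}\bigl(x^{p^i}y^{p^j}+x^{p^j}y^{p^i}\bigr),
\]
the affine and constant parts of $f$ cancelling in this second difference. Bi-additivity (hence $\F_p$-bilinearity) is then immediate, giving both distributive laws (S2), while commutativity is clear from $x+y=y+x$. Axiom (S3) is exactly planarity: from the definition of $\circ$ one has $\delta_a(x)=f(x+a)-f(x)=x\circ a+(f(a)-f(0))$, so $\delta_a$ is a bijection if and only if the $\F_p$-linear map $x\mapsto x\circ a$ is injective, i.e. $x\circ a=0$ forces $x=0$. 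Thus $(\F_q,+,\circ)$ is a commutative pre-semifield.

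It then remains to identify the plane of $f$ with the (pre-)semifield plane of $\circ$. Comparing the two bilinear forms shows $x\circ x=2\,D(x)$, where $D$ is the Dembowski--Ostrom part of $f$, so $f(x)=\tfrac12(x\circ x)+A(x)$ with $A$ affine (here $p$ odd is used to invert $2$). The semifield relative difference set of Theorem~\ref{thm:fund} is $R'=\{(x,x\circ x)\}$ inside the twisted group $G$, while $f$ yields $R=\{(x,f(x))\}$ inside the direct product $\F_q\times\F_q$. I would then check that
\[
\Phi\colon G\to \F_q\times\F_q,\qquad \Phi(x,y)=\bigl(x,\;y-\tfrac12(x\circ x)\bigr),
\]
is a group isomorphism (a one-line computation using $(x+x')\circ(x+x')=x\circ x+2\,x\circ x'+x'\circ x'$), and that $\Phi(R')=\{(x,\tfrac12 x\circ x)\}$. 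Applying the automorphism $(x,y)\mapsto(x,y+L(x))$ with $L$ the linear part of $A$, followed by translation by $(0,d)$, carries $\Phi(R')$ onto $R$. Hence $R$ and $R'$ are equivalent relative difference sets, so by Construction~\ref{con:design_from_difference_set} they describe isomorphic divisible designs and thus isomorphic planes, and Proposition~\ref{pro:isotopic-isomorphic} yields the asserted isotopy.

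For the converse, I would identify $\S$ with $\F_q$ as additive groups, so that $B(x,y)=x\star y$ is a symmetric $\F_p$-bilinear form on $\F_q$. The key structural fact is that every $\F_p$-linear endomorphism of $\F_q$ is a linearized polynomial $\sum c_i x^{p^i}$; applying this in each variable writes $B(x,y)=\sum_{i,j} b_{i,j}x^{p^i}y^{p^j}$, whence $f(x)=x\star x=\sum_{i,j} b_{i,j}x^{p^i+p^j}$. Since $p^i+p^j\le 2p^{m-1}<q$ for $0\le i,j<m$, no reduction modulo $x^q-x$ occurs, and symmetrizing the $(i,j)$ and $(j,i)$ terms puts $f$ into the form $\sum_{0\le i\le j<m} a_{i,j}x^{p^i+p^j}$, i.e. $f$ is Dembowski--Ostrom. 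Planarity follows as before: $f(x+a)-f(x)=2(x\star a)+a\star a$ is a bijection for $a\ne0$ by (S3).

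The routine parts are the verification of the distributive laws and the bookkeeping with linearized polynomials; the one genuinely delicate step is the explicit isomorphism $\Phi$ between the twisted group $G$ and the elementary abelian direct product, which is what reconciles the two a priori different ambient groups and lets \emph{equivalence} of relative difference sets do the work of showing that the two planes coincide.
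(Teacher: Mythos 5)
The paper offers no proof of this theorem at all --- it is quoted as ``essentially \cite[Theorem~3.3]{CouHen2008}'' --- so there is no in-house argument to compare yours with; what matters is whether your blind proof stands on its own, and it does. All the computations check out: the affine part of $f$ cancels in the second difference, so $x\circ y$ is the symmetric biadditive form $\sum a_{i,j}(x^{p^i}y^{p^j}+x^{p^j}y^{p^i})$; $\delta_a(x)=x\circ a+f(a)-f(0)$ turns planarity into axiom (S3); $x\circ x=2D(x)$; the map $\Phi(x,y)=(x,\,y-\tfrac12(x\circ x))$ is a group isomorphism precisely because $(x+x')\circ(x+x')=x\circ x+2\,x\circ x'+x'\circ x'$; and in the converse the expansion $x\star y=\sum b_{i,j}x^{p^i}y^{p^j}$ (linearized in each variable, with $p^i+p^j\le 2p^{m-1}<q$ so no reduction mod $x^q-x$ occurs) gives the Dembowski--Ostrom shape, while $f(x+a)-f(x)=2(x\star a)+a\star a$ gives planarity. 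Your route through \emph{equivalence of relative difference sets} is a nice fit for this survey, since it reconciles the twisted ambient group of Theorem~\ref{thm:fund} with the elementary abelian group of Proposition~\ref{pro:rds_from_planar} explicitly, which the paper never does. Two points you use implicitly and should state: Theorem~\ref{thm:fund} is formulated for semifields but you apply it to the pre-semifield $(\F_q,+,\,\circ\,)$ --- its proof never uses the identity element, so it carries over verbatim --- and passing from ``plane of the pre-semifield'' to ``semifield plane isotopic to it'' needs both the fact that every pre-semifield is isotopic to a semifield and the pre-semifield version of Proposition~\ref{pro:isotopic-isomorphic}; both are standard and quoted in Section~\ref{sec:semifields}, so these are presentational rather than genuine gaps.
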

\par
Now suppose that $f:\F_q\to\F_q$ is a planar function, but not an affine Dembowski-Ostrom polynomial. Is it possible that the corresponding projective plane is a semifield plane (where the semifield multiplication is possibly not related in an obvious way to the function $f$)? As far as we know, this problem remains open.
\par
Only one family of planar functions not of Dembowski-Ostrom type is known. It is also known that the corresponding projective planes are not semifield planes.
\begin{theorem}[\cite{CouMat1997}]
\label{thm:non-DO}
Let $f:\F_{3^m}\to \F_{3^m}$ be a mapping defined by $f(x)=x^{(3^k+1)/2}$. Then $f$ is planar if $\gcd(k,2m)=1$. If $3\le k<m-1$, the corresponding projective plane is not a semifield plane.
\end{theorem}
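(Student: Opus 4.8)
The plan is to treat the two assertions separately. For the planarity statement I would apply Definition~\ref{def:planar} directly: writing $e=(3^k+1)/2$, it suffices to show that $\delta_a(x)=(x+a)^e-x^e$ is a bijection of $\F_{3^m}$ for every nonzero $a$. Since $f$ is a monomial, the substitution $x=au$ gives $\delta_a(au)=a^e\bigl((u+1)^e-u^e\bigr)$, so $\delta_a$ is a bijection for all $a\ne 0$ as soon as $\delta_1$ is, and I may assume $a=1$. The obstruction is the half-integer exponent: $\delta_1$ is not additive, so I cannot argue as for a Dembowski--Ostrom polynomial. The device I would use is the identity $2e=3^k+1$, which turns the square of the monomial into the genuine quadratic form $x^{2e}=x^{3^k+1}=x\cdot x^{3^k}$ built from the Frobenius power $x\mapsto x^{3^k}$. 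Assuming $\delta_1(x)=\delta_1(y)$, I would square and feed in the quadratic character of $\F_{3^m}$ to pass from the exponent $e$ to $3^k+1$, reducing the statement to the claim that an associated equation in $x,y$ involving only $3^k+1$ has no solution with $x\ne y$; here the hypothesis $\gcd(k,2m)=1$ is exactly what forces the relevant greatest common divisors (those controlling the fibres of the squaring map and of $x\mapsto x^{3^k+1}$) to be small enough that only $x=y$ survives. This is the technical heart and the hard part of the first assertion.

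For the second assertion I first record the right target. By Proposition~\ref{pro:rds_from_planar}, $f$ yields a relative $(3^m,3^m,3^m,1)$ difference set in the elementary abelian group $\F_{3^m}\times\F_{3^m}$, and if its plane $\Pi_f$ were a semifield plane then, carrying an elementary abelian regular group, it would come from a commutative semifield (Theorem~\ref{thm:fund} and the following remark), whence by the converse part of Theorem~\ref{thm:semifield_DO_odd} it would be the plane of a Dembowski--Ostrom planar function $g$. So it suffices to show $\Pi_f\not\cong\Pi_g$ for every Dembowski--Ostrom $g$. The first-order invariant I would compute is the algebraic degree over $\F_3$, i.e.\ the base-$3$ digit sum of the exponent. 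The base-$3$ expansion of $e=(3^k+1)/2$ is $\underbrace{1\cdots1}_{k-1}2$, so its digit sum is $k+1$; since this quantity is invariant under multiplying the exponent by powers of $3$ (Frobenius) and under adding linearised terms, $f$ has algebraic degree $k+1\ge 4$ for $k\ge 3$, whereas a Dembowski--Ostrom function has degree $2$. Concretely, a direct expansion shows that the second-order difference $(x+a+b)^e-(x+a)^e-(x+b)^e+x^e$ carries a nonzero $x^{e-2}$ term, with coefficient $\binom{e}{2}\cdot 2ab\not\equiv 0\pmod 3$ for $a,b\ne 0$, so it is not independent of $x$. Hence $f$ is not EA-equivalent to any Dembowski--Ostrom polynomial.

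The gap to close is that isomorphism of planar-function planes is isomorphism of the underlying difference sets, which (as emphasised before Proposition~\ref{pro:rds_projection}) is strictly coarser than the EA-type equivalence just excluded: in principle a Knuth/dual operation could carry $f$ outside its EA-class onto a Dembowski--Ostrom function. To make the argument airtight I would pass to a genuine isomorphism invariant of $\Pi_f$, namely its full collineation group, and use that a semifield plane is simultaneously a translation plane and a dual translation plane (Lenz--Barlotti class at least V). The plan is to determine the collineations of $\Pi_f$ through the multiplier group of the difference set and to show that, for $3\le k<m-1$, they place $\Pi_f$ in Lenz--Barlotti class II, so that $\Pi_f$ is not even a translation plane and hence not a semifield plane. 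The main obstacle is precisely this group computation: bounding the central collineations and verifying that the inequalities $3\le k<m-1$ exclude exactly the degenerate exponents, since near $k=1$ the plane becomes Desarguesian and near $k=m-1$ a coincidence among the cyclotomic images of $e$ could manufacture extra collineations. These two boundary phenomena are where the stated range is consumed.
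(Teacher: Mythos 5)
First, a point of calibration: the paper does not prove this theorem at all --- it is quoted from Coulter and Matthews \cite{CouMat1997} --- so your proposal has to stand on its own, and as it stands it is an outline in which both halves stop exactly at the step that carries the mathematical content. For the planarity assertion, the reduction to $a=1$ via the substitution $x=au$ is correct, and the observation that $2e=3^k+1$ is the right starting point; but the ``associated equation in $x,y$ involving only $3^k+1$'' is never written down, and the claim that $\gcd(k,2m)=1$ ``forces the relevant greatest common divisors to be small enough that only $x=y$ survives'' is not an argument. Note also that squaring $\delta_1(x)=\delta_1(y)$ does not eliminate the exponent $e$: the cross term $\bigl((x+1)x\bigr)^e$ survives, so one must track the quadratic characters of $(x+1)^e$ and $x^e$ separately, and this case analysis is precisely the delicate, multi-page part of the Coulter--Matthews proof. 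As written, the first assertion is not established.

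For the non-semifield assertion there are two gaps. The first is the opening reduction: you assert that if $\Pi_f$ were a semifield plane then, carrying an elementary abelian regular group, it would come from a \emph{commutative} semifield, citing Theorem~\ref{thm:fund} and its remark. Those results go in the opposite direction (from a semifield to a relative difference set); the statement you need --- that a semifield plane of odd order $n$ admitting an abelian collineation group of order $n^2$ of the relevant type necessarily arises from a commutative semifield --- is true but is a separate nontrivial theorem (of the kind established in \cite{CouHen2008} and surveyed in \cite{GhiJun2003}) and must be invoked explicitly. The second gap you diagnose yourself: the algebraic-degree computation (which is correct --- the base-$3$ digit sum of $e$ is indeed $k+1$, and the coefficient $\binom{e}{2}\cdot 2ab$ is nonzero) only shows that $f$ is not EA-equivalent to an affine Dembowski--Ostrom polynomial, which does not rule out an isomorphism of the planes; and the collineation-group/Lenz--Barlotti computation to which you defer as ``the main obstacle'' is exactly the content of \cite{CouMat1997}, whose title is ``Planar functions and planes of Lenz--Barlotti class II.'' Identifying that invariant is the right idea, but since neither the planarity equation nor the collineation-group bound is actually carried out, neither assertion of the theorem is proved by the proposal.
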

\par
The following nice observation is contained in \cite{WenZen2012}.
\begin{proposition}[{\cite[Theorem 2.3]{WenZen2012}}]
\label{pro:planar_2-1}
Let $q$ be an odd prime power and let $f\in\F_q[x]$ be a Dembowski-Ostrom polynomial. Then $f$ is
planar if and only if $f$ is a $2$-to-$1$ mapping, namely $f(x)=a$ has $0$ or $2$ solutions in $x$ for all nonzero $a\in\F_q$.
\end{proposition}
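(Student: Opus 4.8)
The plan is to exploit the defining feature of a Dembowski--Ostrom polynomial: over $\F_q$ with $q$ odd, the ``polarization'' $B(x,y):=f(x+y)-f(x)-f(y)$ is $\F_p$-bilinear and symmetric, and since each exponent $p^i+p^j$ is even we have $f(-x)=f(x)$ and $f(0)=0$, while $B(x,x)=f(2x)-2f(x)=2f(x)$ gives $f(x)=\tfrac12 B(x,x)$. Consequently, for every $c$ the difference function $\delta_c(x)=f(x+c)-f(x)=B(x,c)+f(c)$ from Definition~\ref{def:planar} is an \emph{affine} map whose linear part is $L_c(x):=B(x,c)$. Since an affine map on a finite set is a bijection precisely when its linear part is injective, $f$ is planar if and only if $\ker L_c=\{0\}$ for every nonzero $c$. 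First I would record that $\delta_c^{-1}(0)$ is always nonempty (it contains $-c/2$, using $p$ odd and $f(-c/2+c)=f(-c/2)$), so it is a full coset of $\ker L_c$; hence, writing $k_c:=\abs{\delta_c^{-1}(0)}=\abs{\ker L_c}$, we have $k_c\ge 1$, with equality for a given $c$ exactly when $\delta_c$ is bijective.

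The heart of the argument is a double count of
\[
P:=\abs{\{(x,y)\in\F_q\times\F_q:x\ne y,\ f(x)=f(y)\}}.
\]
Grouping the pairs according to the difference $c=y-x\ne0$ gives $P=\sum_{c\ne0}k_c$, a sum of $q-1$ terms each at least $1$; thus $P\ge q-1$, with equality if and only if every $k_c=1$, i.e.\ if and only if $f$ is planar. Grouping instead according to the common value $a=f(x)=f(y)$ and writing $n_a:=\abs{f^{-1}(a)}$ gives $P=\sum_a n_a(n_a-1)=\sum_a n_a^2-q$. These two expressions for $P$ are the engine of the proof, and the remaining work is to match the condition $P=q-1$ with the $2$-to-$1$ condition on the $n_a$.

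For the implication planar $\Rightarrow$ $2$-to-$1$, planarity gives $P=q-1$, hence $\sum_a n_a^2=2q-1$. Because $f$ is even and $p$ is odd, the fibre over any $a\ne0$ is a union of pairs $\{x,-x\}$, so $n_a$ is even, while $n_0$ is odd (it contains $0$, and its remaining elements pair up). Writing $n_0=1+2t$ and $n_a=2s_a$ for $a\ne0$ and substituting into $\sum_a n_a=q$ and $\sum_a n_a^2=2q-1$, I would derive $\sum_{a\ne0}s_a(s_a-1)=-t^2$; since the left-hand side is a sum of nonnegative integers, this forces $t=0$ and every $s_a\in\{0,1\}$, that is $n_0=1$ and $n_a\in\{0,2\}$ for $a\ne0$. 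For the converse, ``$f$ is $2$-to-$1$'' means $n_a\le2$ for every $a$; since $n_0$ is odd this already pins $n_0=1$, and then $P=\sum_{a\ne0,\,n_a=2}2=2\cdot\tfrac{q-1}{2}=q-1$, whence every $k_c=1$ and $f$ is planar.

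The step I expect to be the main obstacle is the careful handling of the fibre over $0$: the double count only yields $P\ge q-1$, and one must combine the evenness of $f$ with the parities of the $n_a$ to convert the numerical identity $\sum_a n_a^2=2q-1$ into the precise conclusion $n_0=1$, $n_a\in\{0,2\}$, rather than some other even distribution of fibre sizes (indeed a weaker ``$0$ or $2$ solutions for nonzero values only'' reading would not suffice, as an even $f$ could then have $\abs{f^{-1}(0)}>1$). Establishing the bilinearity of $B$, the defining property of Dembowski--Ostrom polynomials, and the resulting affineness of $\delta_c$ is routine but essential, since it is exactly what reduces planarity to the single linear-algebraic condition $\ker L_c=\{0\}$ and makes the collision count $P$ computable.
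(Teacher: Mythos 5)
The paper states this proposition without proof, merely citing \cite{WenZen2012}, so there is no internal argument to compare yours against; judged on its own, your proof is correct and complete, and it is the natural double-counting argument. The reduction of planarity to $\ker L_c=\{0\}$ via the bilinearity of $B(x,y)=f(x+y)-f(x)-f(y)$, the identities $f(0)=0$, $f(-x)=f(x)$, $f(x)=\tfrac12 B(x,x)$, the observation that $\delta_c^{-1}(0)$ is a nonempty coset of $\ker L_c$ (so $k_c\ge 1$ with equality iff $\delta_c$ is bijective), and the two evaluations $P=\sum_{c\ne 0}k_c=\sum_a n_a^2-q$ all check out, as does the parity bookkeeping converting $\sum_a n_a^2=2q-1$ into $n_0=1$ and $n_a\in\{0,2\}$.

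The one point that deserves emphasis is the fibre over $0$, which you half-noticed. Your converse uses ``$n_a\le 2$ for \emph{every} $a$'', including $a=0$, whereas the proposition as printed only constrains nonzero $a$; these readings are genuinely inequivalent, and the literal statement is false under the weaker one. Concretely, take $f(x)=x^4+x^2$ on $\F_9=\F_3(\omega)$ with $\omega^2=\omega+1$ (so $\omega$ generates $\F_9^\times$ and $\omega^4=-1$). This is a Dembowski-Ostrom polynomial with $f^{-1}(0)=\{0,\omega^2,\omega^6\}$, so $n_0=3$, while each of the three nonzero values $2$, $\omega$, $2\omega+1$ is attained exactly twice and every other nonzero value is not attained; thus $f(x)=a$ has $0$ or $2$ solutions for every nonzero $a$. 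Yet $f$ is not planar, since $B(\omega^2,\omega^2)=2f(\omega^2)=0$ puts $\omega^2$ in $\ker L_{\omega^2}$. Your own computation makes the mechanism transparent: under the nonzero-fibre hypothesis alone, $P=n_0(n_0-1)+(q-n_0)=(n_0-1)^2+q-1$, so planarity is equivalent to the additional condition $n_0=1$. So the hypothesis must be read as bounding all fibres (equivalently, as the stated condition together with $f^{-1}(0)=\{0\}$, which is how \cite{WenZen2012} intends ``$2$-to-$1$''); with that reading your proof is watertight, and it would be worth making the counterexample explicit rather than only remarking that the weaker reading ``would not suffice''.
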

\par
\begin{example}
\label{exa:planar_example}
The mapping $x\mapsto x^{10}\pm x^6-x^2$ on $\F_{3^m}$ is planar for odd $m$ (and also Dembowski-Ostrom). These functions have been found \cite{CouMat1997} and \cite{DinYua2006}. The planarity is easily verified using Proposition~\ref{pro:planar_2-1}. First observe that the polynomial $y^5\pm y^3-y$ is a Dickson polynomial, which is a permutation polynomial in $\F_{3^m}[y]$ if and only if $m$ is odd~\cite[Theorem~6.17]{LidNie1997}. Hence, after replacing $y$ by $x^2$, we obtain a $2$-$1$ Dembowski-Ostrom mapping on $\F_{3^m}$ for odd $m$.
\end{example}
\par
Next we consider the case that the ambient group $G$ is $\Z_4^{\;m}$. Unlike in the case that $G$ is elementary abelian, there is no canonical way to represent $G$ by the additive group of a finite field. One may use the Galois ring $\operatorname{GR}(4,m)$ as in~\cite{SchZho2013}, which has the advantage that the Galois ring multiplication can be used, but we prefer a different (equivalent) approach. 
\par
We represent the group $\Z_4^{\; m}$ as $\F_{2^m}\times \F_{2^m}$ with the group operation
\begin{equation}
(x,y)\star(x',y')=(x+x',y+y'+x\cdot x').   \label{eqn:group_operation}
\end{equation}
Since this group is not a direct product of two groups of order $2^m$, we cannot use Proposition~\ref{pro:rds_from_planar} to construct relative $(2^m,2^m,2^m,1)$ difference sets in this group. However, observe that every relative $(2^m,2^m,2^m,1)$ difference set in $\F_{2^m}\times\F_{2^m}$ can still be written as
\[
R=\{(x,f(x))\, : \, x \in \F_{2^m}\}
\]
for some function $f:\F_{2^m}\to\F_{2^m}$, in which case the forbidden subgroup is $\{0\}\times\F_{2^m}$. It is then readily verified that $R$ is a relative difference set if and only if the equation
\[
f(x+a)-f(x)+a\cdot x=b
\]
has a unique solution $x$ for all $a,b\in\F_{2^m}$ with $a\ne 0$. We therefore modify the definition of a planar function from $\F_{2^m}$ to $\F_{2^m}$, which reflects the structure of the ambient group.
\begin{definition}
\label{def:planar_even}
A function $f:\F_{2^m}\to \F_{2^m}$ is \emph{planar} if $x\mapsto f(x+a)+f(x)+ax$ is a permutation on $\F_{2^m}$ for all nonzero $a\in \F_{2^m}$. 
\end{definition}
\par
There will be no confusion of this definition with Definition~\ref{def:planar} since every function $f:\F_{2^m}\to\F_{2^m}$ satisfies
\[
f(x+a)-f(x)=f((x+a)+a)-f(x+a)\quad\text{for all $x,a\in\F_{2^m}$},
\]
and hence planar functions from $\F_{2^m}$ to $\F_{2^m}$ according to Definition~\ref{def:planar} cannot exist. A trivial example of a planar function $f:\F_{2^m}\to\F_{2^m}$ is $f(x)=0$. More generally, every affine polynomial $f\in\F_{2^m}[x]$ induces a planar function on $\F_{2^m}$.
\par
We have now established the following result, which is essentially contained in~\cite{Zho2013}.
\begin{theorem}[\cite{Zho2013}]
\label{thm:z4rep}
Let $R$ be a subset of the group $\F_{2^m}\times \F_{2^m}$ whose operation is given in~\eqref{eqn:group_operation}. Then $R$ is a $(2^m,2^m,2^m,1)$ difference set relative to $\{0\}\times\F_{2^m}$ if and only if there is a planar function $f:\F_{2^m}\to\F_{2^m}$ such that
\[
R=\{(x,f(x))\, : \, x \in \F_{2^m}\}.
\]
\end{theorem}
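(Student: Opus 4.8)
The plan is to prove both implications through a single computation of the difference list in the group $G=(\F_{2^m}\times\F_{2^m},\star)$, after first establishing that any relative difference set of the stated parameters must be the graph of a function. First I would record the group-theoretic facts I need. The identity of $G$ is $(0,0)$, and from \eqref{eqn:group_operation} one checks that $(x,y)^{-1}=(x,y+x^2)$, using $y+y=0$ and $x\cdot x=x^2$ in characteristic $2$. The forbidden subgroup $N=\{0\}\times\F_{2^m}$ has exactly $2^m$ cosets, and the coset of $(x,y)$ is the fibre $\{x\}\times\F_{2^m}$, since $(x,y)\star(0,z)=(x,y+z)$.

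For the forward direction I would first argue that $R$ must be a graph. If $R$ contained two distinct elements $(x,y_1)$ and $(x,y_2)$ in a common coset of $N$, then a direct computation gives $(x,y_1)\star(x,y_2)^{-1}=(0,y_1+y_2)$, a nonzero element of $N$; this is impossible for a relative difference set. Hence $R$ meets each coset of $N$ at most once, and since $\abs{R}=2^m$ equals the number of cosets, $R$ meets each coset exactly once. This forces $R=\{(x,f(x)):x\in\F_{2^m}\}$ for a uniquely determined function $f\colon\F_{2^m}\to\F_{2^m}$, and the forbidden subgroup is $\{0\}\times\F_{2^m}$ as claimed.

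The core of both directions is the difference computation. Taking $r=(a,f(a))$ and $r'=(b,f(b))$ in $R$, I would compute
\[
(a,f(a))\star(b,f(b))^{-1}=\bigl(a+b,\;f(a)+f(b)+b^2+a\cdot b\bigr),
\]
and then substitute $c=a+b$, so $a=c+b$. The $b^2$ terms cancel in characteristic $2$, leaving the difference $\bigl(c,\;f(b+c)+f(b)+c\cdot b\bigr)$. When $c=0$ (that is, $a=b$) this is the identity, so no nonzero difference lies in $N$, regardless of $f$. For each fixed nonzero $c$, the second coordinate, as a function of $b$, is precisely the map $x\mapsto f(x+c)+f(x)+cx$ appearing in Definition~\ref{def:planar_even}. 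Since there are exactly $2^m(2^m-1)$ ordered pairs $(r,r')$ with $r\ne r'$ and exactly $\abs{G\setminus N}=2^m(2^m-1)$ target values, the list of nonzero differences covers $G\setminus N$ exactly once (the $\lambda=1$ condition) if and only if, for every nonzero $c$, the map $b\mapsto f(b+c)+f(b)+cb$ is a bijection, that is, if and only if $f$ is planar. This computation simultaneously yields the forward implication (combined with the graph argument) and the converse (for planar $f$, the graph is a relative difference set).

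I do not expect a serious obstacle: the statement essentially unwinds the definitions, and the only place demanding care is the characteristic-$2$ bookkeeping in the cross term $a\cdot b$ and the cancellation of the $b^2$ terms coming from the inverse formula. The one genuinely structural point, that a relative difference set relative to $\{0\}\times\F_{2^m}$ is automatically a graph, is the step I would state most carefully, since it is what allows the single difference computation to serve for both directions.
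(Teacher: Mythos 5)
Your proof is correct and follows essentially the same route as the paper, which only sketches the argument (``it is then readily verified\dots''): write $R$ as the graph of a function $f$ and observe that the difference computation in the group \eqref{eqn:group_operation} reduces the relative difference set condition to the bijectivity of $x\mapsto f(x+a)+f(x)+ax$ for all nonzero $a$, i.e.\ to Definition~\ref{def:planar_even}. Your explicit justification that $R$ must meet each coset of $N$ exactly once, and the cancellation of the $b^2$ terms from the inverse formula, are exactly the details the paper leaves to the reader.
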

\par
Theorem~\ref{thm:z4rep} should be compared with Proposition~\ref{pro:rds_from_planar}. Again, by Constructions~\ref{con:plane_from_design} and~\ref{con:design_from_difference_set}, every planar function according to Definition~\ref{def:planar_even} corresponds to a unique projective plane. We also have the following counterpart of Theorem~\ref{thm:semifield_DO_odd}.
\begin{theorem}
\label{thm:semifield_DO_even}
Let $q$ be a power of $2$ and let $f:\F_q\to \F_q$ be a mapping. If $f$ is an affine Dembowski-Ostrom planar function, then the corresponding projective plane is a semifield plane, where the semifield is isotopic to the (commutative) pre-semifield $(\F_q,+,\,\circ\,)$ whose multiplication is given by $x\circ y=f(x+y)+f(x)+f(y)+f(0)+xy$. Conversely, if a projective plane is a semifield plane corresponding to a commutative semifield $(\S,+,\,\star\,)$, then $f(x)=x\star x$ is a Dembowski-Ostrom planar function.
\end{theorem}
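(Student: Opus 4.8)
The plan is to treat the two directions separately. In the forward direction I would first verify that the stated multiplication defines a commutative pre-semifield, and then identify its plane with the plane of $f$ by an explicit equivalence of relative difference sets. Setting $B_f(x,y)=f(x+y)+f(x)+f(y)+f(0)$, I would observe that because $f$ is an affine Dembowski--Ostrom polynomial the affine summands contribute nothing to $B_f$ while each monomial $x^{2^i+2^j}$ polarizes to the symmetric term $x^{2^i}y^{2^j}+x^{2^j}y^{2^i}$; hence $B_f$ is a symmetric $\F_2$-bilinear form. Consequently $x\circ y=B_f(x,y)+xy$ is symmetric and $\F_2$-bilinear, giving commutativity and both distributive laws (S2). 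For (S3) I would rewrite $a\circ y=\big(f(y+a)+f(y)+ay\big)+f(a)+f(0)$, so that for fixed $a\ne 0$ the map $y\mapsto a\circ y$ is the permutation of Definition~\ref{def:planar_even} followed by a translation, hence a bijection; since $a\circ 0=0$, this forces $y=0$ whenever $a\circ y=0$, and commutativity handles the other factor. Thus $(\F_q,+,\circ)$ is a commutative pre-semifield, isotopic to a semifield.

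The decisive even-characteristic point, recorded next, is that $x\circ x=x^2$ (the cross term and the $f(0)$-terms cancel in characteristic $2$). By the pre-semifield analogue of Theorem~\ref{thm:fund}, whose proof uses only distributivity and the absence of zero divisors, the associated relative difference set is $R_\circ=\{(x,x^2):x\in\F_q\}$ inside $(\F_q\times\F_q,\star_\circ)$, where $\star_\circ$ twists the second coordinate by $x\circ x'$. To compare with $R_f=\{(x,f(x))\}$ in the group~\eqref{eqn:group_operation}, I would write $f=Q+L$ with $Q$ its Dembowski--Ostrom part and $L$ affine, so $B_f(x,x')=Q(x+x')+Q(x)+Q(x')$, and then check that $\phi(x,y)=(x,y+Q(x))$ is a group isomorphism from $(\F_q\times\F_q,\star_\circ)$ onto the group~\eqref{eqn:group_operation}. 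Then $\phi(R_\circ)=\{(x,x^2+Q(x))\}$, which differs from $R_f=\{(x,Q(x)+L(x))\}$ only through the affine function $A(x)=x^2+L(x)$. Finally I would absorb $A$ by an equivalence: $(x,y)\mapsto(x,y+\alpha(x))$ with $\alpha$ the $\F_2$-linear part of $A$ is an automorphism of~\eqref{eqn:group_operation}, and a subsequent translation by $(0,d)$ accounts for the constant term. Hence $R_\circ$ and $R_f$ are equivalent, so by Constructions~\ref{con:design_from_difference_set} and~\ref{con:plane_from_design} they describe the same plane, which is therefore the semifield plane of $(\F_q,+,\circ)$.

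For the converse I would start from a commutative semifield $(\S,+,\star)$, identify $(\S,+)$ with $(\F_q,+)$, and use that distributivity makes $\star$ an $\F_2$-bilinear form while commutativity makes it symmetric. The feature that sharply separates this case from the odd one of Theorem~\ref{thm:semifield_DO_odd} is that in characteristic $2$ the diagonal is additive: $(x+a)\star(x+a)=x\star x+a\star a$, so $f(x)=x\star x$ satisfies $f(x+a)+f(x)=f(a)$ and is an $\F_2$-linear (linearized) polynomial. Planarity in the sense of Definition~\ref{def:planar_even} is then immediate, since for every nonzero $a$ the map $x\mapsto f(x+a)+f(x)+ax=a\star a+ax$ is an affine bijection of $\F_q$. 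I would emphasize that, precisely because the diagonal degenerates to a linearized map, its polarization is zero and it no longer determines the full multiplication; this is the honest even-characteristic analogue of the odd statement, and it explains why only the planarity of $f=x\star x$, viewed as a (here linearized) Dembowski--Ostrom map, and not the reconstruction of $\star$, can be asserted in this direction.

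The hard part will be the bookkeeping forced by the $\Z_4^{\;m}$ structure of the ambient group. Because a semifield of even order produces a relative difference set in the $\star$-twisted group $(\S\times\S,\star_\star)$ rather than in the standard group~\eqref{eqn:group_operation}, the crux of the forward direction is the explicit isomorphism $\phi$ together with the affine correction $A$; landing the automorphism-plus-translation argument exactly on $R_f$, and not merely on an isomorphic plane, is the delicate step, and it hinges on $A$ being affine, which in turn depends on the clean cancellation $x\circ x=x^2$ special to characteristic $2$. Checking that $\phi$ and the correction genuinely respect the twisted group laws, and confirming that the linearized function $f=x\star x$ is the correct degenerate Dembowski--Ostrom planar analogue of the odd-characteristic statement, is the part I would verify most carefully.
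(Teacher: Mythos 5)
The paper itself gives no proof of this theorem: it is stated as the even-characteristic counterpart of Theorem~\ref{thm:semifield_DO_odd}, with the underlying work delegated to the cited references (Coulter--Henderson, Zhou, Schmidt--Zhou). So there is no ``paper proof'' to match against; judged on its own, your argument is correct and essentially complete. The forward direction is the substantive part and you get all the key points right: the polarization $B_f$ kills the affine summands and is symmetric biadditive and alternating; $a\circ y=(f(y+a)+f(y)+ay)+f(a)+f(0)$ reduces (S3) to Definition~\ref{def:planar_even}; $x\circ x=x^2$; the proof of Theorem~\ref{thm:fund} uses neither (S4) nor associativity of $\circ$, so it applies to the pre-semifield; the map $\phi(x,y)=(x,y+Q(x))$ is an isomorphism onto the group~\eqref{eqn:group_operation} precisely because $x\circ x'=Q(x+x')+Q(x)+Q(x')+xx'$; and the leftover discrepancy $A(x)=x^2+L(x)$ is affine, hence removable by an automorphism $(x,y)\mapsto(x,y+\alpha(x))$ followed by a translation, giving equivalence of the two relative difference sets and hence isomorphism of the planes. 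Two small remarks. First, you implicitly use that the design of the difference set $\{(x,x\circ x)\}$ from Theorem~\ref{thm:fund} is isomorphic to the design of Construction~\ref{con:1}; this is true (via $(x,y)\mapsto(x,y+x\circ x)$) and is taken for granted throughout the paper, but a sentence acknowledging it would close the loop. Second, your reading of the converse is the right one: in characteristic $2$ the diagonal $x\star x$ is additive, hence a linearized polynomial, hence planar --- but under the paper's own $p=2$ definition a nonzero linearized polynomial is \emph{affine} rather than Dembowski--Ostrom, so the converse is only correct when read as ``affine Dembowski--Ostrom,'' which is exactly what your proof delivers; your observation that $x\star x$ no longer determines $\star$ (so that the genuinely nontrivial converse --- that every commutative semifield plane of order $2^n$ is described by \emph{some} affine Dembowski--Ostrom planar function --- requires first normalizing by an isotopy so that $x\star x=x^2$ and then integrating $x\star y+xy$ to a Dembowski--Ostrom polynomial) is an accurate diagnosis of why the even-characteristic statement is weaker than its odd-characteristic model.
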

\par
All known planar functions from $\F_{2^m}$ to $\F_{2^m}$ are induced by affine Dembowski-Ostrom polynomials. Hence, in view of Theorem~\ref{thm:semifield_DO_even}, all known planar functions from $\F_{2^m}$ to $\F_{2^m}$ produce semifield planes. This is contrary to the case of planar functions from $\F_{p^m}$ to $\F_{p^m}$ for $p$ an odd prime, where we have the exceptional example given in Theorem~\ref{thm:non-DO}.

%%%%%%%%%%%%%%%%%%%%%%%%%%%%%%%%%%%%%%%%%%%%%%%%%%%%%%%%%%%%%%%%%%%%%%%%%%%%%

\section{Planar functions in odd characteristic}
\label{sec:p_odd}

We have seen in the previous section that every commutative semifield can be described by a planar function. We do not claim that such a description is natural and indeed the planar function of a commutative semifield can look quite cumbersome. However, interesting examples of commutative semifields have been found using Dembowski-Ostrom planar functions (see \cite{ZhaKyuWan2009} and \cite{ZhoPot2011}, for example). We have already seen examples of such planar functions in Example~\ref{exa:planar_example}. 
\par
In what follows, we let $p$ be an odd prime and consider the simplest polynomial mappings on $\F_{p^m}$, namely monomial mappings $x\mapsto cx^d$. It is readily verified that in order to study planar monomials $cx^d$ in $\F_{p^m}[x]$, we can without loss of generality restrict ourselves to the case $c=1$ and $d<p^m$ and $p\nmid d$. The only known examples of planar monomial mappings $x\mapsto x^d$ on $\F_{p^m}$ with $d<p^m$ and $p\nmid d$ are given in Table~\ref{tab:pp_odd}.
\begin{table}[ht]
\centering
\caption{Known planar monomial mappings $x^d$ on $\F_{p^m}$}
\vspace{1ex}
\label{tab:pp_odd}
\begin{tabular}{c|c|c|c}
\hline
$d$                & $p$ & condition                     & reference\\ \hline\hline
$2$                & odd & none                          & folklore, Desarguesian plane\\[2ex]
$p^k+1$            & odd & $\dfrac{m}{\gcd(k,m)}$ is odd & commutative Albert semifields \cite{Alb1961}\\[2ex]
$\dfrac{p^k+1}{2}$ & $3$ & $\gcd(k,2m)=1$                & Coulter-Matthews \cite{CouMat1997}\\[1ex]\hline
\end{tabular}
\end{table}
\par
It is sometimes conjectured that the list of examples given Table~\ref{tab:pp_odd} is exhaustive. We believe that this is difficult to prove. It is however possible to get a slightly weaker result, for which we need the following definition.
\begin{definition}
Let $p$ be a prime. A monomial $x^d$ in $\F_p[x]$ is \emph{exceptional planar} if $x^d$ induces a planar function on infinitely many extensions of~$\F_p$.
\end{definition}
\par
Notice that the mappings in Table~\ref{tab:pp_odd} are all exceptional. Indeed, it has been proved in~\cite{Led2012} and~\cite{Zie2013} that there are no further examples (the case that $p\mid d-1$ is handled in~\cite{Led2012} and the remaining cases are settled in~\cite{Zie2013}).
\begin{theorem}[{\cite{Led2012},~\cite{Zie2013}}]
The only exceptional planar monomials $x^d$ in $\F_p[x]$ with $p\nmid d$ are those given in Table~\ref{tab:pp_odd}. 
\end{theorem}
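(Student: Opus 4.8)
The plan is to recast exceptional planarity as exceptionality of an associated univariate polynomial and then appeal to the classification of exceptional polynomials. Since $p\nmid d$, the polynomial $g(x):=(x+1)^d-x^d\in\F_p[x]$ has degree $d-1$. For $a\neq 0$ one has $(x+a)^d-x^d=a^d\,g(x/a)$, so for each fixed $a\neq 0$ the difference map $\delta_a$ permutes $\F_{p^m}$ if and only if $g$ does; hence $x^d$ is planar on $\F_{p^m}$ exactly when $g$ permutes $\F_{p^m}$, and $x^d$ is exceptional planar exactly when $g$ permutes $\F_{p^n}$ for infinitely many $n$. By the standard dictionary between permutation behaviour over infinitely many extensions and exceptionality --- proved via the Lang--Weil bound, since an absolutely irreducible plane curve over $\F_p$ has $\sim p^n$ points over $\F_{p^n}$ and thus forces collisions $g(x)=g(y)$ with $x\neq y$ --- this is equivalent to $g$ being an \emph{exceptional polynomial} over $\F_p$, i.e.\ the curve $\phi(x,y):=\bigl(g(x)-g(y)\bigr)/(x-y)=0$ has no absolutely irreducible factor defined over $\F_p$. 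The whole problem is thereby reduced to deciding, for each $d$, whether $g$ is exceptional; the tabulated values are known to be so, so the content is to show that no others are.

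\textbf{Monodromy framework and the decisive dichotomy.} I would analyse exceptionality through monodromy. Letting $t$ be transcendental over $\F_p$, the splitting field of $g(x)-t$ has Galois groups $G\trianglelefteq A$ over $\overline{\F_p}(t)$ and over $\F_p(t)$ (the geometric and arithmetic monodromy groups, acting on the $d-1$ roots), and exceptionality of $g$ is equivalent to a group-theoretic condition on the pair $(A,G)$ due to Fried, Guralnick and Saxl. After stripping off compositional factors one is left to match the \emph{indecomposable} exceptional polynomials against the rigid binomial-type shape $(x+1)^d-x^d$. The governing dichotomy is whether $\deg g=d-1$ is prime to $p$.

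\textbf{Tame case $p\nmid d-1$.} Here the classical classification says that each indecomposable exceptional polynomial is, up to pre- and post-composition with linear polynomials, either a power $x^n$ or a Dickson polynomial. Since $(x+1)^d-x^d$ is intimately related to Dickson polynomials (exactly the phenomenon underlying Example~\ref{exa:planar_example}), matching $g$ against these two shapes pins the admissible exponents down to $d=2$ and, in characteristic $3$, the Coulter--Matthews value $d=(3^k+1)/2$. This is the regime settled in~\cite{Zie2013}.

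\textbf{Wild case $p\mid d-1$ and the main obstacle.} When $p\mid d-1$ a direct computation gives $g(x)=x^{p^k}+x+1$ for $d=p^k+1$, a linearized (additive) polynomial with affine monodromy, recovering the commutative Albert family; the task is to exclude every other $d$ with $p\mid d-1$. This is where essentially all the difficulty lies, and is the regime of~\cite{Led2012}: the ramification over the branch points is now wild, the list of possible indecomposable exceptional polynomials is substantially longer and includes sporadic families peculiar to characteristics $2$ and $3$, and one must control the higher ramification groups and the admissible almost-simple or affine monodromy groups tightly enough to rule out all exotic candidates --- equivalently, to prove that $\phi(x,y)=0$ acquires an absolutely irreducible $\F_p$-rational component for every $d$ outside Table~\ref{tab:pp_odd}. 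Carrying out this wild-ramification and absolute-irreducibility analysis in small characteristic is the crux of the proof.
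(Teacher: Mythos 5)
The paper itself contains no proof of this theorem; it is a survey statement quoted from Leducq and Zieve, so your argument has to be judged on whether it would actually establish the classification. Your opening reduction is correct and is indeed the standard first step: since $p\nmid d$, the identity $\delta_a(x)=(x+a)^d-x^d=a^d\,g(x/a)$ with $g(x)=(x+1)^d-x^d$ of degree $d-1$ shows that $x^d$ is planar on $\F_{p^m}$ exactly when $g$ permutes $\F_{p^m}$, and the Lang--Weil dictionary then converts exceptional planarity into exceptionality of $g$ over $\F_p$. Your division into the cases $p\mid d-1$ and $p\nmid d-1$ also matches how the labour is split between the two cited papers.

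From that point on, however, the proposal is a road map rather than a proof: both of the steps that constitute the actual content of the theorem are named and then deferred. In the tame case $p\nmid d-1$ you invoke the classification of indecomposable exceptional polynomials of degree prime to $p$ (linear twists of $x^n$ and of Dickson polynomials) and assert that ``matching $g$ against these two shapes'' yields $d=2$ and $d=(3^k+1)/2$; but $g$ need not be indecomposable, so one must rule out arbitrary compositions of linearly twisted cyclic and Dickson polynomials equalling the rigid shape $(x+1)^d-x^d$, and no such argument is given --- this matching is precisely the work done in the cited reference, not a routine verification. In the wild case $p\mid d-1$ you verify the Albert example $g(x)=x^{p^k}+x+1$ and then state explicitly that excluding every other such $d$ ``is where essentially all the difficulty lies,'' leaving it entirely open; equivalently, you never produce an absolutely irreducible $\F_p$-rational factor of $\bigl(g(x)-g(y)\bigr)/(x-y)$ for any $d$ outside Table~\ref{tab:pp_odd}. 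So while the framing is sound and consistent with the literature, the proposal does not prove the statement; it identifies correctly where the proof would have to happen and stops there.
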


%%%%%%%%%%%%%%%%%%%%%%%%%%%%%%%%%%%%%%%%%%%%%%%%%%%%%%%%%%%%%%%%%%%%%%%%%%%%%

\section{Planar functions in characteristic $2$}
\label{sec:p_even}

In this section, we give some examples of planar functions from $\F_{2^m}$ to $\F_{2^m}$ according to Definition~\ref{def:planar_even}. We have already seen that every affine polynomial in $\F_{2^m}[x]$ induces a planar function from $\F_{2^m}$ to $\F_{2^m}$. These are trivial examples. A large family of nontrivial examples were given in~\cite{Zho2013}, which is based on a construction for commutative semifields due to Kantor~\cite{Kan2003}.
\begin{theorem}[{\cite[Example~2.2]{Zho2013}}]
\label{thm:Kantor_planar}
Assume that we have a chain of finite fields $\K=\K_0\supset\K_1\supset\cdots\supset\K_n$ of characteristic $2$ with $[\K:\K_n]$ odd. Let $\operatorname{tr}_i$ be the relative trace from $\K$ to $\K_i$. Then, for all nonzero $\zeta_1,\dots,\zeta_n\in\K$, the mapping $f:\K\to\K$ given by
\[
f(x)=\left( x\sum_{i=1}^n \operatorname{tr}_i(\zeta_ix)\right)^2
\]
is planar.
\end{theorem}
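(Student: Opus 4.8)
The plan is to reduce the planarity of $f$, in the sense of Definition~\ref{def:planar_even}, to the statement that a certain commutative pre-semifield multiplication has no zero divisors, and then to invoke Kantor's theorem that this multiplication does define a semifield. Write $L(x)=\sum_{i=1}^n\operatorname{tr}_i(\zeta_ix)$, so that $f(x)=(xL(x))^2=x^2L(x)^2$. Since each $\operatorname{tr}_i$ is additive, $L$ is $\F_2$-linear, and since squaring is the Frobenius we have $L(x+a)^2=L(x)^2+L(a)^2$. First I would compute, for fixed $a$,
\begin{align*}
f(x+a)+f(x)+ax
&=(x^2+a^2)(L(x)^2+L(a)^2)+x^2L(x)^2+ax\\
&=x^2L(a)^2+a^2L(x)^2+ax+a^2L(a)^2.
\end{align*}
The last summand is independent of $x$, so $f$ is planar precisely when, for every nonzero $a$, the map $h_a(x)=x^2L(a)^2+a^2L(x)^2+ax$ is a permutation of $\K$. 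Each of $x\mapsto x^2$, $x\mapsto L(x)^2$, and $x\mapsto ax$ is $\F_2$-linear, hence so is $h_a$; as $\K$ is finite, $h_a$ is a permutation if and only if it is injective, i.e.\ if and only if its kernel is trivial.

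The second step is to recognise $h_a$ as a semifield product. Setting $x\circ y=(xL(y)+yL(x))^2+xy=x^2L(y)^2+y^2L(x)^2+xy$, one checks that $\circ$ is biadditive and commutative, that $f(x+y)+f(x)+f(y)+f(0)+xy=x\circ y$, and that $h_a(x)=x\circ a$. By Theorem~\ref{thm:semifield_DO_even}, $\circ$ is exactly the pre-semifield multiplication attached to the affine Dembowski-Ostrom polynomial $f$. Consequently the kernel of $h_a$ is trivial for every nonzero $a$ if and only if $x\circ a=0$ forces $x=0$ whenever $a\ne0$; that is, $f$ is planar if and only if $(\K,+,\circ)$ has no zero divisors, which is axiom~(S3).

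It remains to prove that $(\K,+,\circ)$ has no zero divisors, and this is the crux: it is the assertion that Kantor's construction yields a commutative pre-semifield, so I would cite~\cite{Kan2003}. To indicate where the hypothesis enters, suppose $x\circ y=0$ with $x,y\ne0$; then $xy=(xL(y)+yL(x))^2$, whence $\sqrt{xy}=x\sum_i\operatorname{tr}_i(\zeta_iy)+y\sum_i\operatorname{tr}_i(\zeta_ix)$. The strategy is to multiply this identity by the $\zeta_j$ and apply the relative traces $\operatorname{tr}_j$, pulling the scalars $\operatorname{tr}_i(\zeta_i\,\cdot\,)\in\K_i\subseteq\K_j$ (valid for $i\ge j$ along the chain) out of $\operatorname{tr}_j$ and using that matched cross terms cancel in characteristic~$2$; the oddness of $[\K:\K_n]$ makes a relative trace act as the identity on the bottom field $\K_n$, and this is what ultimately forces the successive trace components to vanish and hence $x=0$ or $y=0$. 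Carrying out this bookkeeping along the chain $\K_0\supset\cdots\supset\K_n$ is the main obstacle, and it is precisely the content of Kantor's theorem that I would rely on rather than reprove here.
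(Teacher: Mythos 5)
Your reduction is correct: the computation $f(x+a)+f(x)+ax=x^2L(a)^2+a^2L(x)^2+ax+a^2L(a)^2$ and the identification $h_a(x)=x\circ a$ with $x\circ y=(xL(y)+yL(x))^2+xy$ are sound, so planarity of $f$ is indeed equivalent to the absence of zero divisors in Kantor's commutative pre-semifield. This is essentially the same route as the paper, which states the theorem with no proof and defers exactly the same crux to \cite{Zho2013} and \cite{Kan2003}; you make the reduction explicit but, like the survey, leave the key no-zero-divisor lemma to Kantor.
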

\par
Kantor~\cite{Kan2003} also gives a lower bound on the number of non-isomorphic projective planes constructed by the planar functions in Theorem~\ref{thm:Kantor_planar}. 
\par
In what follows, we consider planar monomial mappings $x\mapsto cx^d$ on $\F_{2^m}$. Unlike in the case of odd characteristic, the planarity of this function can depend on the choice of the coefficient $c$. We can however assume without loss of generality that $d<2^m$. We are interested in those exponents $d<2^m$ such that $x\mapsto cx^d$ is planar on $\F_{2^m}$ for some nonzero $c\in\F_{2^m}$. The only known such exponents are listed in Table~\ref{tab:pp_even}.
\begin{table}[ht]
\centering
\caption{Known exponents $d$ such that $cx^d$ is planar on $\F_{2^m}$ for some $c\in\F_{2^m}$}
\label{tab:pp_even}
\vspace{1ex}
\begin{tabular}{c|c|c}
\hline
$d$          &   condition & reference\\ \hline\hline
$2^k$        &   none      & trivial\\
$2^k+1$      &   $m=2k$    & \cite{SchZho2013}\\
$4^k(4^k+1)$ &   $m=6k$    & \cite{SchZie2013}\\\hline
\end{tabular}
\end{table} A full characterization of those $c\in\F_{2^m}$ for which these monomials are planar on $\F_{2^m}$ is also given in~\cite{SchZie2013}. It can be shown that the planes corresponding to the planar functions identified in Table~\ref{tab:pp_even} are all Desarguesian.
\par
It has been conjectured in~\cite{SchZho2013} that the list provided in Table~\ref{tab:pp_even} is exhaustive. As in the case of odd characteristic, we believe that this is difficult to prove. We are therefore interested in classifying \emph{exceptional planar exponents}, namely positive integers $d$ such that $x\mapsto cx^d$ is planar on $\F_{2^m}$ for some nonzero $c\in\F_{2^m}$ and infinitely many $m$. It was shown in~\cite{SchZho2013} that, if $d$ is an odd exceptional planar exponent, then $d=1$. The even exceptional planar exponents have been classified in~\cite{MulZie2013}. In fact, the following sharper result was proved in \cite{MulZie2013}.
\begin{theorem}[{\cite[Theorem~1.1]{MulZie2013}}]
Let $d$ be a positive integer such that $d^4\le 2^m$ and let $c\in\F_{2^m}$ be nonzero. Then the function $x\mapsto cx^d$ is planar on $\F_{2^m}$ if and only if $d$ is a power of $2$.
\end{theorem}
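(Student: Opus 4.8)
The plan is to prove the two implications separately; the forward implication is immediate and essentially all the content lies in the converse. For the ``if'' direction, observe that if $d=2^k$ then $x\mapsto cx^d=c\,x^{2^k}$ is a linearized monomial, hence an affine polynomial over $\F_{2^m}$. By the observation preceding Definition~\ref{def:planar_even}, every affine polynomial induces a planar function, so $cx^d$ is planar for every nonzero $c$ and every $m$, with no size restriction needed.

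For the ``only if'' direction I would argue by contraposition, turning planarity into the absence of rational points on a plane curve and then contradicting this via the Weil bound. By Definition~\ref{def:planar_even}, $cx^d$ is planar precisely when $\Delta_a(x)=c(x+a)^d+cx^d+ax$ is injective for every nonzero $a$. Forming $\Delta_a(x)+\Delta_a(y)$ and dividing by $x+y$ (which divides $s^d+t^d$ in characteristic~$2$), a collision $\Delta_a(x)=\Delta_a(y)$ with $x\ne y$ exists iff the curve $c\,G_a(x,y)=a$ has an off-diagonal point, where $G_a(x,y)=\bigl((x+a)^d+(y+a)^d+x^d+y^d\bigr)/(x+y)$. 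Substituting $x=au$, $y=aw$ and using the homogeneity $G_a(au,aw)=a^{d-1}H(u,w)$ with
\[
H(u,w)=\frac{(u+1)^d+(w+1)^d+u^d+w^d}{u+w},
\]
a symmetric polynomial of degree $d-2$, this condition becomes $H(u,w)=c^{-1}a^{2-d}$ with $u\ne w$. Hence planarity forces the plane curve $D_v\colon H(u,w)=v$ to have no affine $\F_{2^m}$-point off the diagonal $u=w$ for every $v$ in the set $\mathcal V=\{c^{-1}a^{2-d}:a\in\F_{2^m},\,a\ne0\}$, which is a union of cosets of index at most $d-2$ in $\F_{2^m}^{\times}$ and so has size at least $(2^m-1)/(d-2)$.

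Next I would apply the Weil bound to $D_v$. If $D_v$ has an absolutely irreducible component defined over $\F_{2^m}$, then, being a plane curve of degree at most $d-2$ and genus at most $(d-3)(d-4)/2$, that component carries at least $2^m+1-(d-3)(d-4)\sqrt{2^m}$ rational points; at most $d-2$ of them lie at infinity and at most $d-2$ on the diagonal. The hypothesis $d^4\le 2^m$, i.e. $\sqrt{2^m}\ge d^2\ge(d-3)(d-4)$, makes the remaining count strictly positive, so an off-diagonal affine point survives, contradicting planarity. Thus planarity forces every $D_v$ with $v\in\mathcal V$ to have no absolutely irreducible $\F_{2^m}$-component.

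The crux, and the step I expect to be the main obstacle, is to show that this degeneracy can persist for all $v$ in the large set $\mathcal V$ only when $d$ is a power of $2$. When $d=2^k$ one has $(u+1)^d=u^d+1$, so the numerator of $H$ vanishes identically, $H\equiv0$, and every $D_v$ with $v\ne0$ is empty, consistent with planarity. When $d$ is not a power of $2$, a Lucas/Kummer argument on the binomial coefficients shows $H$ is a nonzero polynomial of positive degree, and I would argue that its fibers cannot all lack $\F_{2^m}$-rational components. By a Bertini--Stein type theorem, $H(u,w)-v$ is absolutely irreducible for all but finitely many $v$ unless $H$ is a proper composition $\psi\circ\rho$ or a proper power $L^{2^j}$; since $|\mathcal V|$ grows like $2^m/(d-2)$, it eventually exceeds any fixed finite exceptional set and meets the set of good $v$, producing an absolutely irreducible $D_v$ and hence a contradiction. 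The delicate remaining case is when every fiber is reducible, where one must descend to the reduced fibers, namely curves of the form $\rho=\mathrm{const}$ or $L=\mathrm{const}$ still defined over $\F_{2^m}$, and show via Weil applied to their components that off-diagonal points persist. Ruling out all such degenerate factorization patterns of $(u+1)^d+(w+1)^d+u^d+w^d$ except the one produced by $d=2^k$ is where the combinatorics of the binary expansion of $d$ enters, and is the heart of the argument.
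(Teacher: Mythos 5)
The survey states this result purely as a citation to \cite{MulZie2013} and contains no proof of its own, so your sketch has to be judged on whether it would close on its own terms. The ``if'' direction is correct and matches the paper's remark that affine polynomials are planar. Your reduction of the ``only if'' direction is also set up correctly: planarity of $cx^d$ is equivalent to each fibre $H(u,w)=v$, $v\in\mathcal{V}=c^{-1}\{a^{2-d}:a\ne 0\}$, having no off-diagonal $\F_{2^m}$-point, and under $d^4\le 2^m$ the Weil bound does force such a point onto any absolutely irreducible component that is defined over $\F_{2^m}$ and is not the diagonal itself (the diagonal can be a component of at most one fibre, and your degree count $d-2$ is only an upper bound when $d$ is even, but these are minor).

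The genuine gap is exactly the step you flag as ``the heart of the argument,'' and your proposed route through it would not work as described. You must show that for $d$ not a power of $2$, some fibre over $\mathcal{V}$ has an absolutely irreducible component \emph{defined over} $\F_{2^m}$. Stein's theorem handles the non-composite case, since $|\mathcal{V}|\ge (2^m-1)/(d-2)$ exceeds the size of the exceptional set. But in the composite case $H=\psi\circ\rho$, the fibre $H=v$ splits into the fibres $\rho=v_i$ over the roots $v_i$ of $\psi(t)=v$, and when no absolutely irreducible component is defined over $\F_{2^m}$ (e.g.\ when the $v_i$ are conjugate over $\F_{2^m}$ but not rational), the fibre has at most $O(d^2)\le\sqrt{2^m}$ rational points, all lying on intersections of conjugate components. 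Such a fibre can legitimately be empty off the diagonal, which is fully consistent with planarity; ``applying Weil to the components of the reduced fibres'' cannot manufacture rational points that are not there. So the composite case is not a technicality to be discharged by descent: one must prove that the specific polynomial $\bigl((u+1)^d+(w+1)^d+u^d+w^d\bigr)/(u+w)$ admits no proper functional decomposition (nor is a proper power) unless $d$ is a power of $2$, and that analysis of the binary digits of $d$ --- which is the actual content of the Müller--Zieve theorem --- is entirely absent from your proposal. As written, the argument does not close.
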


%%%%%%%%%%%%%%%%%%%%%%%%%%%%%%%%%%%%%%%%%%%%%%%%%%%%%%%%%%%%%%%%%%%%%%%%%%%%%

\section{Component functions of planar functions}
\label{sec:components}

In this section we study the component functions corresponding to a planar function. When $p$ is an odd prime, the component functions of a planar function on $\F_{p^m}$ are $p$-ary bent functions. These are well-studied objects. We therefore study the component functions corresponding to a planar function on $\F_{2^m}$, according to Definition~\ref{def:planar_even}. Identifying such a planar function with a $(2^m,2^m,2^m,1)$ difference set in $\Z_4^{\;m}$ relative to $2\Z_4^{\;m}$, the component functions are obtained by a projection with respect to a subgroup of $2\Z_4^{\;m}\cong\Z_2\times\cdots\times\Z_2$ of order $2^{m-1}$ (see Proposition~\ref{pro:rds_projection}). Thus the component functions correspond to relative $(2^m,2,2^m,2^{m-1})$ difference sets in $\Z_4\times\Z_2^{\;m-1}$. We represent this group as follows. Let $B$ be a symmetric bilinear form on $\F_2^{\;m}$, write
\[
G=\{(x,y):x\in \F_2^{\;m}, y\in \F_2\},
\]
and define an operation on $G$ via
\begin{equation}
(x,y)\ast(x',y')=(x+x', y+y'+B(x,x')).   \label{eqn:G_op}
\end{equation}
\begin{proposition}
With the notation as above, $(G,\,\ast\,)$ is an abelian group isomorphic to $\Z_4\times \Z_2^{\,m-1}$ if $B$ is nonalternating and isomorphic to $\Z_2^{\;m+1}$ if $B$ is alternating. 
\end{proposition}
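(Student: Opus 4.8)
The plan is to first confirm the group axioms directly from the defining properties of $B$, and then to pin down the isomorphism type by examining the squaring map, which detects precisely the $\Z_4$ summands. Recall that, over $\F_2$, the symmetric form $B$ is \emph{alternating} exactly when $B(x,x)=0$ for all $x$, and nonalternating when $B(x_0,x_0)=1$ for some $x_0$.

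First I would verify that $(G,\ast)$ is an abelian group of order $2^{m+1}$. Commutativity is immediate from the symmetry of $B$, since $B(x,x')=B(x',x)$. Associativity reduces to checking that the two ways of associating a triple agree in the second coordinate, that is, that
\[
B(x,x')+B(x+x',x'')=B(x',x'')+B(x,x'+x'');
\]
expanding both sides with bilinearity shows that each equals $B(x,x')+B(x,x'')+B(x',x'')$, so this is just the $2$-cocycle identity satisfied by any bilinear form. The element $(0,0)$ is the identity because $B(x,0)=0$, and the inverse of $(x,y)$ is $(x,y+B(x,x))$, since $-x=x$ in characteristic $2$. As there are $2^m$ choices for $x$ and $2$ for $y$, we have $|G|=2^{m+1}$.

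Next I would compute squares. A direct calculation gives $(x,y)\ast(x,y)=(0,B(x,x))$, and squaring again yields $(0,0)$, so $G$ has exponent dividing $4$. The key observation is that the map $Q\colon\F_2^{\,m}\to\F_2$ defined by $Q(x)=B(x,x)$ is in fact $\F_2$-\emph{linear}: by symmetry and characteristic $2$, $Q(x+x')=Q(x)+Q(x')+2B(x,x')=Q(x)+Q(x')$. Thus $B$ is alternating precisely when $Q\equiv 0$, and nonalternating precisely when $Q$ is a nonzero (hence surjective) linear functional. Since $G$ is abelian, the map $g\mapsto g\ast g$ is an endomorphism, and its image is the subgroup $S=\{(0,Q(x)):x\in\F_2^{\,m}\}=\{0\}\times\operatorname{im}(Q)$.

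Finally I would read off the isomorphism type from $S$. Writing the finite abelian $2$-group $G$ as $\Z_4^{\,a}\times\Z_2^{\,b}$ with $2a+b=m+1$, the subgroup of squares has order $2^a$, because each $\Z_4$ factor contributes a subgroup of squares of order $2$ and each $\Z_2$ factor contributes none. Hence $a=\dim_{\F_2}\operatorname{im}(Q)$, which is $0$ when $B$ is alternating and $1$ when $B$ is nonalternating. The alternating case then forces $b=m+1$ and $G\cong\Z_2^{\,m+1}$, while the nonalternating case forces $b=m-1$ and $G\cong\Z_4\times\Z_2^{\,m-1}$. I expect the only genuinely substantive point to be this last step: merely computing that the exponent is $4$ does not distinguish $\Z_4\times\Z_2^{\,m-1}$ from the many other abelian groups of exponent $4$ and order $2^{m+1}$. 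It is the linearity of $Q$, and the resulting fact that the squares form a subgroup of order exactly $2$ in the nonalternating case, that forces a single $\Z_4$ factor.
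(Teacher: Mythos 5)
Your proof is correct and follows essentially the same route as the paper's: both hinge on the computation $(x,y)\ast(x,y)=(0,B(x,x))$ and on the fact that $x\mapsto B(x,x)$ is a linear form, which vanishes identically exactly when $B$ is alternating. The only cosmetic difference is that you determine the number of $\Z_4$ factors from the \emph{image} of the squaring endomorphism, whereas the paper counts the elements of order at most $2$ (its kernel); you also spell out the verification of the group axioms, which the paper leaves as immediate.
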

\begin{proof}
It is immediate that $(G,\,\ast\,)$ is abelian. We have $(x,y)\ast(x,y)=(0,B(x,x))$. Hence, the nonzero elements in $G$ have order $2$ or $4$. If $B$ is alternating, then every element in $G$ has order $2$ and $G$ is isomorphic to $\Z_2^{\;m+1}$. If $B$ is nonalternating, then $B(x,x)$ is a nontrivial linear form, from which we see that exactly half of the elements in $G$ have order $2$ and therefore $G$ is isomorphic to $\Z_4\times \Z_2^{\,m-1}$.
\end{proof}
\par
The following result characterizes the relative $(2^m,2,2^m,2^{m-1})$ difference sets in~$G$.
\begin{theorem}
\label{thm:rds_negabent}
Let $R$ be a subset of $G$ whose operation is given in~\eqref{eqn:G_op}. Then $R$ is a $(2^m,2,2^m,2^{m-1})$ difference set in $G$ relative to $N=\{(0,y):y\in\F_2\}$ if and only if there is a function $f:\F_2^{\;m}\to\F_2$ such that
\[
f(x+a)+f(x)+B(a,x)=b
\]
has $2^{m-1}$ solutions for all $b\in\F_2$ and all nonzero $a\in \F_2^{\;m}$ and
\begin{equation}
R=\{(x,f(x))\,:\,x\in \F_2^{\;m}\}.   \label{eqn:R_from_f}
\end{equation}
\end{theorem}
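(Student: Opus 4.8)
The plan is to follow the standard argument that identifies graphs of functions with transversals of the forbidden subgroup, and then to translate the difference-set condition into the stated counting condition by a direct computation in $(G,\ast)$.

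First I would observe that the cosets of $N$ in $G$ are precisely the sets $\{x\}\times\F_2$, indexed by $x\in\F_2^{\;m}$, since $(x,y)\ast N=\{(x,y),(x,y+1)\}$. If $R$ is a relative $(2^m,2,2^m,2^{m-1})$ difference set, then no nonzero difference lies in $N$, so distinct elements of $R$ lie in distinct cosets of $N$; as $|R|=2^m=[G:N]$, the set $R$ meets every coset exactly once and is therefore the graph $\{(x,f(x)):x\in\F_2^{\;m}\}$ of a unique function $f\colon\F_2^{\;m}\to\F_2$. Conversely, any such graph has size $2^m$ and, as the computation below shows, produces only differences with nonzero first coordinate, hence none in $N$. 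This reduces the theorem to computing the differences of a graph and matching their multiplicities against $\lambda=2^{m-1}$.

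Next I would compute inverses and differences in $(G,\ast)$. From $(x,y)\ast(x,y+B(x,x))=(0,0)$ one reads off $(x,y)^{-1}=(x,y+B(x,x))$. Writing a general difference of two points of the graph as $(u,f(u))\ast(v,f(v))^{-1}$ and setting $a=u+v$ and $x=v$, so that $u=x+a$, the first coordinate is $a$ and the second coordinate simplifies, using bilinearity of $B$ together with the identity $B(x+a,x)+B(x,x)=B(a,x)$ valid in characteristic $2$, to $f(x+a)+f(x)+B(a,x)$. Thus every difference has the form
\[
(a,\;f(x+a)+f(x)+B(a,x)),\qquad a=u+v,\ x=v.
\]

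Finally I would read off the difference-set condition. For $u\ne v$ we have $a\ne 0$, so every difference has nonzero first coordinate and hence lies in $G\setminus N$, which confirms the automatic avoidance of $N$ claimed above. For a fixed nonzero $a$, as $x$ ranges over $\F_2^{\;m}$ the element $(a,b)$ occurs exactly as often as $x$ satisfies $f(x+a)+f(x)+B(a,x)=b$. Therefore $R$ is a relative difference set with $\lambda=2^{m-1}$ precisely when this equation has $2^{m-1}$ solutions for every $b\in\F_2$ and every nonzero $a\in\F_2^{\;m}$, which is the stated condition. I expect no serious obstacle: the computation is elementary, and the only point requiring care is the bookkeeping of the $B$-terms in the inverse and in the product, where the two occurrences of $B(x,x)$ must be seen to cancel in characteristic $2$.
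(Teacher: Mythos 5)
Your proposal is correct and follows essentially the same route as the paper: compute the inverse $(x,y)^{-1}=(x,y+B(x,x))$, express a general difference of two graph points as $(a,\,f(x+a)+f(x)+B(a,x))$, and read off the multiplicity condition. The only difference is that you spell out the (correct) coset-counting argument showing that a relative difference set must be the graph of a function, a step the paper leaves implicit.
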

\begin{proof}
Note that the inverse of $(x,y)\in G$ is given by
\[
(x,y)^{-1}=(x,y+B(x,x)).
\]
We therefore obtain, for all $x,a\in\F_2^{\;m}$,
\begin{equation}
(x+a,f(x+a))\ast(x,f(x))^{-1}=(a,f(x+a)+f(x)+B(a,x)).   \label{eqn:differences}
\end{equation}
We now readily verify that, if $f$ has the properties stated in the theorem, then $R$ is a $(2^m,2,2^m,2^{m-1})$ difference set relative to $N$. Conversely, if $R$ is such a relative difference set, then there is some function $f:\F_2^{\;m}\to\F_2$ such that~\eqref{eqn:R_from_f} holds. From~\eqref{eqn:differences} we then verify that $f$ must have the properties stated in the theorem.
\end{proof}
\par
Let $f$ be a function from $\F_2^{\;m}$ to $\F_2$ and write $R=\{(x,f(x))\,:\,x\in \F_2^{\;m}\}$. It is well known that relative difference sets are subsets of a group with certain character values. In particular, $R$ is a $(2^m,2,2^m,2^{m-1})$ difference set in $G$ relative to $N=\{(0,y):y\in\F_2\}$ if and only if 
\[
\Big|\sum_{z\in R}\chi(z)\Big|^2=2^m
\]
for every character $\chi$ of $G$ that is nontrivial on $N$. The characters of $G$ depend on the choice of the bilinear form $B$. Take the standard bilinear form defined via the scalar product
\[
B(x,y)=\langle x,y\rangle.
\]
Then the characters of $G$ are as follows. For $a\in\F_2^{\;m}$, the functions $\chi_a:G\to\C$ defined by 
\[
\chi_a(x,y)=(-1)^{\langle a, x \rangle}
\]
are the $2^m$ characters of $G$ that are trivial on $N$. Define $\gamma:G\to\C$ by
\[
\gamma(x,y)=i^{w(x)}(-1)^y,
\]
where $w(x)$ denotes the (Hamming) weight of the vector $x\in\F_2^{\;m}$ and $i=\sqrt{-1}$. It is readily verified that $\gamma$ is indeed a homomorphism. Therefore, the $2^m$ characters of $G$ that are nontrivial on $N$ are $\chi_a\cdot\gamma$ for $a\in\F_2^{\;m}$.
\par
Since
\[
\sum_{z\in R}(\chi_a\cdot\gamma)(z)=\sum_{x\in \F_2^m}(-1)^{\langle x, a\rangle+f(x)}i^{w(x)},
\]
we find that $R$ is a $(2^m,2,2^m,2^{m-1})$ difference set in $G$ relative to $N$ if and only if
\[
\bigg|\sum_{x\in \F_2^m}(-1)^{\langle x, a\rangle+f(x)}i^{w(x)}\bigg|^2=2^m
\]
for all $a\in\F_2^{\;m}$. Functions $f$ with this property have been called \emph{negabent} in the literature~\cite{ParPot2007}. One can also show that $\Z_4$-valued bent functions~\cite{Sch2009} are equivalent to such relative difference sets, hence also equivalent to negabent functions.
\par
We remark that, while the notions of negabent functions and $\Z_4$-valued bent functions have been introduced only fairly recently, the underlying relative difference sets have been studied before, as the following construction from~\cite{AraJunPot1990} shows.
\begin{theorem}
\label{thm:negabent_from_bent}
Let $G$ be a group and let $D$ and $E$ be two difference sets (in the usual sense) in $G$. Then the set
\[
\{0\} \times D \ \cup\  \{1\} \times E \  \cup \  \{2\} \times (G\setminus D)
\  \cup\  \{3\} \times (G\setminus E)
\]
is a relative $(2|G|,2,2|G|,|G|)$ difference set in $\Z_4\times G$ relative to $2\Z_4\times\{0\}$.
\end{theorem}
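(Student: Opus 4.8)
The plan is to run the character test for relative difference sets, exactly as in the negabent computation carried out just above; I assume $G$ is abelian (which covers every application in this paper), so that characters are available, and note that the general case follows from the same group-ring identity. Set $\Gamma=\Z_4\times G$, $N=2\Z_4\times\{0\}$ and $v=|G|$. Applying characters to the defining identity $RR^{(-1)}=2v+v(\Gamma-N)$ in $\Z[\Gamma]$ (where $2v$ stands for $2v$ times the identity) and using $k-\lambda n=2v-v\cdot 2=0$, one sees that $R$ is a relative $(2v,2,2v,v)$ difference set relative to $N$ if and only if $\chi(R)=0$ for every character $\chi$ that is nontrivial on $\Gamma$ but trivial on $N$, and $|\chi(R)|^2=2v$ for every character $\chi$ nontrivial on $N$ (the principal character being automatic). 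The whole proof thus reduces to evaluating these character sums.

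Since $\Gamma$ is a genuine direct product, every character factors as $\chi=\psi_s\otimes\rho$, where $\psi_s$ is the character of $\Z_4$ with $\psi_s(1)=i^s$ for some $s\in\{0,1,2,3\}$ and $\rho$ is a character of $G$; because $\chi(2,0)=i^{2s}$, such a $\chi$ is trivial on $N$ exactly when $s$ is even. Writing the four blocks as $R_0=D$, $R_1=E$, $R_2=G\setminus D$, $R_3=G\setminus E$, we obtain
\[
\chi(R)=\rho(D)+i^s\rho(E)+i^{2s}\rho(G\setminus D)+i^{3s}\rho(G\setminus E),
\]
which I would simplify using $\rho(G\setminus D)=\rho(G)-\rho(D)$ and the analogous identity for $E$. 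When $\rho$ is nontrivial we have $\rho(G)=0$, so $\rho(G\setminus D)=-\rho(D)$ and $\rho(G\setminus E)=-\rho(E)$; for even $s$ the four terms cancel to give $\chi(R)=0$, as required. When $\rho$ is trivial one substitutes $\rho(D)=|D|$, $\rho(E)=|E|$, $\rho(G)=v$ and finds $\chi(R)=0$ for $s=2$. The only nontrivial input so far is the difference-set character identity: from $DD^{(-1)}=(|D|-\lambda_D)+\lambda_D G$ one gets $|\rho(D)|^2=|D|-\lambda_D$ for nontrivial $\rho$, and similarly for $E$.

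The main obstacle is the remaining case, $s$ odd (the characters nontrivial on $N$), where all the arithmetic content sits. Here $\chi(R)=2\bigl(\rho(D)\pm i\rho(E)\bigr)$ for nontrivial $\rho$, so that $|\chi(R)|^2=4\bigl(|\rho(D)|^2+|\rho(E)|^2\bigr)\mp 8\,\mathrm{Im}\bigl(\rho(E)\overline{\rho(D)}\bigr)$, while for trivial $\rho$ one gets $|\chi(R)|^2=(2|D|-v)^2+(2|E|-v)^2$. Both quantities must equal $2v$, and I expect this to pin down two properties of $D$ and $E$. First, using $|\rho(D)|^2=|D|-\lambda_D$ together with the difference-set relation $k(v-k)=(k-\lambda)(v-1)$ for each of $D$ and $E$, the trivial-$\rho$ identity reduces to the order relation $(|D|-\lambda_D)+(|E|-\lambda_E)=v/2$, i.e.\ the Menon--Hadamard condition $|G|=4(k-\lambda)$ that the bent-function difference sets of this section satisfy. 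Second, the nontrivial-$\rho$ identity additionally forces the cross term $\mathrm{Im}\bigl(\rho(E)\overline{\rho(D)}\bigr)$ to vanish for every nontrivial $\rho$; this holds automatically because the groups occurring here are elementary abelian $2$-groups, whose characters are real-valued, so $\rho(D)$ and $\rho(E)$ are real.

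Establishing these two points---equivalently, the symmetry $DE^{(-1)}=ED^{(-1)}$ in the group-ring formulation that also handles non-abelian $G$---is the crux of the argument. Once they are in place, every character condition follows from the difference-set order identity, and the computation closes, yielding the stated relative $(2|G|,2,2|G|,|G|)$ difference set.
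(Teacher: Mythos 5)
You are right that the character (equivalently, group-ring) verification is the natural approach here; note that the paper itself offers no proof of this theorem, only a citation to Arasu--Jungnickel--Pott, so there is nothing internal to compare against. Your computations are correct, and they in fact reveal something important: the two conditions you isolate at the end, namely $(|D|-\lambda_D)+(|E|-\lambda_E)=|G|/2$ and the vanishing of the cross term (equivalently, the symmetry $DE^{(-1)}=ED^{(-1)}$), are \emph{genuine additional hypotheses}, not consequences of $D$ and $E$ merely being difference sets, and the theorem as literally stated is false without them. Concretely, writing $v=|G|$ and $n_D=|D|-\lambda_D$, the differences taken within the layers $\{0\}\times D$, $\{1\}\times E$, $\{2\}\times(G\setminus D)$, $\{3\}\times(G\setminus E)$ cover each element $(0,g)$ with $g\neq 0$ exactly $\lambda_D+\lambda_E+\bar\lambda_D+\bar\lambda_E=2v-2(n_D+n_E)$ times, which forces $n_D+n_E=v/2$; and the differences landing in the layer $\{1\}\times G$ cover $(1,g)$ exactly $v$ times plus the coefficient of $g$ in $2\bigl(ED^{(-1)}-DE^{(-1)}\bigr)$, which forces $DE^{(-1)}=ED^{(-1)}$. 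Neither is automatic. For the first, take $G=\Z_7$ and $D=E=\{1,2,4\}$: then $n_D+n_E=4\neq 7/2$ and each $(0,g)$, $g\neq 0$, is hit only $1+1+2+2=6$ times instead of $7$. For the second, take a $(16,6,2)$ difference set $D$ in $\Z_4\times\Z_4$ and $E=D+g$ with $g$ of order $4$: both are Menon difference sets and $n_D+n_E=8=v/2$, yet $ED^{(-1)}=DD^{(-1)}+g\neq DD^{(-1)}-g=DE^{(-1)}$, and the element $(1,g)$ is covered $24$ times rather than $16$.

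Consequently the step you defer as ``the crux'' cannot be closed in the stated generality, because the statement is false in that generality; the defect is in the theorem as transcribed, not in your argument. What you have written is essentially a complete proof of the corrected statement, in which $D$ and $E$ are required to be Hadamard (Menon) difference sets with $DE^{(-1)}=ED^{(-1)}$ --- and this covers the only use the paper makes of the result, namely $G=\Z_2^{\,m}$ with $D,E$ arising from bent functions, where the symmetry is automatic because every element of $G$ is its own inverse. Two small points to tighten: first, the trivial-$\rho$, $s$ odd, condition pins down only the \emph{sum} $n_D+n_E=v/2$, and you need the nonnegativity of $(2|D|-v)^2=v^2-4n_Dv+4n_D$ and its analogue for $E$ to conclude that each of $D$, $E$ individually has Menon parameters; second, the claim that ``the general case follows from the same group-ring identity'' for nonabelian $G$ should be accompanied by the actual group-ring computation (it does work, and produces exactly the same two conditions, with $DE^{(-1)}=ED^{(-1)}$ now a noncommutative constraint), since characters alone do not suffice there.
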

\par
Note that every bent function on $\F_2^{\; m}$ gives rise to a difference set in $\Z_2^{\; m}$. Hence we can use Theorem~\ref{thm:negabent_from_bent} to construct from every bent function a relative difference set in $\Z_4\times \Z_2^{\; m}$ that corresponds to a negabent function on $\F_2^{\;m+1}$. Since, for a bent function on $\F_2^{\; m}$, $m$ is necessarily even, we obtain negabent functions on $\F_2^{\; m}$ with $m$ odd.

%%%%%%%%%%%%%%%%%%%%%%%%%%%%%%%%%%%%%%%%%%%%%%%%%%%%%%%%%%%%%%%%%%%%%%

\section{Concluding remarks and open problems}
\label{sec:conclusion}

We summarize some open problems related to the content of this paper. We note that there are many problems related to semifields and bent functions, which we do not want to recall here; we just want to restrict ourselves to problems which arise from the difference set point of view.
\begin{problem}
Improve the bound on the rank of an abelian group in Theorem \ref{thm:ppc} 
containing a relative $(p^m,p^m,p^m,1)$ difference set if $p$ is an odd prime.
\end{problem}
\par
Related to this problem one may ask whether a result similar to Theorem \ref{thm:ppc} holds for nonabelian groups. One may also ask whether it is possible to relax the condition that $\lambda=1$ to a small value of $\lambda$. The case $\lambda=2$ is discussed in \cite{Hir2010}. 
\par
In case of odd characteristic, we know one example of a planar function that is not Dembowski-Ostrom (see Table \ref{tab:pp_odd}). In the even characteristic case, such an example is not known.
\begin{problem}
Is it possible to find planar functions in characteristic $2$ that are not of Dembowski-Ostrom type?
\end{problem}
\par
As we have seen, difference sets are natural descriptions of projective planes. Some (but not all) interesting substructures of planes have nice interpretations when the plane is described using a difference set (unitals, subplanes, ovals, arcs, blocking sets \cite{ResGhiJun2002}, \cite{BilKor1989}, \cite{GhiJun2003b}, \cite{GhiJun2006}, \cite{Ho1997}, for example). Typically, the classical difference set representation of a plane (namely a Singer cycle~\cite{Sin1938})  or planar functions in odd characteristic have been used. We believe that more interpretations can be found using the planar functions in even characteristic described here.
\begin{problem}
Is it possible to describe substructures of the Desarguesian projective plane easily in terms of planar functions or in terms of the corresponding relative difference set in $\Z_4^{\; m}$?
\end{problem}

% \bibliography{references}

\begin{thebibliography}{10}

\bibitem{Alb1960}
A.~A. Albert, \emph{Finite division algebras and finite planes}, Proc.
  {S}ympos. {A}ppl. {M}ath., {V}ol. 10, American Mathematical Society,
  Providence, R.I., 1960, pp.~53--70.

\bibitem{Alb1961}
A.~A. Albert, \emph{Generalized twisted fields}, Pacific J. Math. \textbf{11}
  (1961), 1--8.

\bibitem{AraJunPot1990}
K.~T. Arasu, D.~Jungnickel, and A.~Pott, \emph{Divisible difference sets with
  multiplier $-1$}, Journal of Algebra \textbf{133} (1990), no.~1, 35--62.

\bibitem{BetJunLen1999}
T.~Beth, D.~Jungnickel, and H.~Lenz, \emph{Design theory i, ii}, 2nd ed.,
  Encyclopedia of Mathematics and its Applications, Cambridge University Press,
  Cambridge, 1999.

\bibitem{BilKor1989}
M.~Biliotti and G.~Korchm{\'a}ros, \emph{Transitive blocking sets in cyclic
  projective planes}, Proceedings of the {F}irst {I}nternational {C}onference
  on {B}locking {S}ets ({G}iessen, 1989), no. 201, 1991, pp.~35--38.

\bibitem{BloJunSch2002}
A.~Blokhuis, D.~Jungnickel, and B.~Schmidt, \emph{Proof of the prime power
  conjecture for projective planes of order {$n$} with abelian collineation
  groups of order {$n\sp 2$}}, Proc. Amer. Math. Soc. \textbf{130} (2002),
  no.~5, 1473--1476.

\bibitem{CouHen2008}
R.~S. Coulter and M.~Henderson, \emph{Commutative presemifields and
  semifields}, Adv. Math. \textbf{217} (2008), no.~1, 282--304.

\bibitem{CouMat1997}
R.~S. Coulter and R.~W. Matthews, \emph{Planar functions and planes of
  {L}enz-{B}arlotti class $\mbox{II}$}, Des. Codes Cryptogr. \textbf{10}
  (1997), no.~2, 167--184.

\bibitem{ResGhiJun2002}
M.~J. de~Resmini, D.~Ghinelli, and D.~Jungnickel, \emph{Arcs and ovals from
  abelian groups}, Des. Codes Cryptogr. \textbf{26} (2002), no.~1-3, 213--228.

\bibitem{Dic1906}
L.~E. Dickson, \emph{On commutative linear algebras in which division is always
  uniquely possible}, Trans. Amer. Math. Soc. \textbf{7} (1906), no.~4,
  514--522.

\bibitem{DinYua2006}
C.~Ding and J.~Yuan, \emph{A new family of skew {P}aley-{H}adamard difference
  sets}, J. Combin. Theory Ser.\ A \textbf{113} (2006), 1526--1535.

\bibitem{Gan1976}
M.~J. Ganley, \emph{On a paper of {P}. {D}embowski and {T}. {G}. {O}strom:
  ``{P}lanes of order $n$\ with collineation groups of order $n\sp{2}$''
  ({M}ath. {Z}. {\bf 103} (1968), 239--258)}, Arch. Math. (Basel) \textbf{27}
  (1976), no.~1, 93--98.

\bibitem{GhiJun2003}
D.~Ghinelli and D.~Jungnickel, \emph{Finite projective planes with a large
  abelian group}, Surveys in combinatorics, 2003 (Bangor) (Cambridge), London
  Math. Soc. Lecture Note Ser., vol. 307, Cambridge Univ. Press, 2003,
  pp.~175--237.

\bibitem{GhiJun2003b}
\bysame, \emph{On finite projective planes in {L}enz-{B}arlotti class at least
  {I}.3}, Adv. Geom. (2003), no.~suppl., S28--S48, Special issue dedicated to
  Adriano Barlotti.

\bibitem{GhiJun2006}
\bysame, \emph{Some geometric aspects of finite abelian group}, Rend. Mat.
  Appl. (7) \textbf{26} (2006), no.~1, 29--68.

\bibitem{Glu1990}
D.~Gluck, \emph{Affine planes and permutation polynomials}, Coding theory and
  design theory, Part II, Springer, New York, 1990, pp.~99--100.

\bibitem{Hir1990}
Y.~Hiramine, \emph{On planar functions}, J. Algebra \textbf{133} (1990), no.~1,
  103--110.

\bibitem{Hir2010}
\bysame, \emph{On abelian {$(2n,n,2n,2)$}-difference sets}, J. Combin. Theory
  Ser. A \textbf{117} (2010), no.~7, 996--1003.

\bibitem{Ho1997}
C.~Y. Ho, \emph{Arc subgroups of planar {S}inger groups}, Mostly finite
  geometries ({I}owa {C}ity, {IA}, 1996), Lecture Notes in Pure and Appl.
  Math., vol. 190, Dekker, New York, 1997, pp.~227--233.

\bibitem{HugPip1973}
D.~R. Hughes and F.~C. Piper, \emph{Projective planes}, Graduate Texts in
  Mathematics, vol.~6, Springer-Verlag, New York, 1973.

\bibitem{Jun1982}
D.~Jungnickel, \emph{On automorphism groups of divisible designs}, Canad. J.
  Math. \textbf{34} (1982), no.~2, 257--297.

\bibitem{Jun1987}
\bysame, \emph{On a theorem of {G}anley}, Graphs Combin. \textbf{3} (1987),
  no.~2, 141--143.

\bibitem{Kan2003}
W.~M. Kantor, \emph{Commutative semifields and symplectic spreads}, J. Algebra
  \textbf{270} (2003), no.~1, 96--114.

\bibitem{Knu1965}
D.~E. Knuth, \emph{Finite semifields and projective planes}, J. Algebra
  \textbf{2} (1965), 182--217.

\bibitem{LavPol2012}
M.~Lavrauw and O.~Polverino, \emph{Finite semifields}, Current Research Topics
  in Galois Geometry (L.~Storme and J.~De Beule, eds.), Nova Science
  Publishers, New York, 2012, pp.~131--159.

\bibitem{Led2012}
E.~Leducq, \emph{Functions which are {PN} on infinitely many extensions of
  {$\mathbb{F}_p$}, $p$ odd}, 2012, arXiv:1006.2610v2 [math.NT] (to appear in
  Des.\ Codes Cryptogr.).

\bibitem{LidNie1997}
R.~Lidl and H.~Niederreiter, \emph{Finite fields}, 2nd ed., Encyclopedia of
  Mathematics and its Applications, vol.~20, Cambridge University Press, 1997.

\bibitem{Men1977}
C.~Menichetti, \emph{On a {K}aplansky conjecture concerning three-dimensional
  division algebras over a finite field}, J. Algebra \textbf{47} (1977), no.~2,
  400--410.

\bibitem{MulZie2013}
P.~M\"uller and M.~E. Zieve, \emph{Low-degree planar monomials in
  characteristic two}, 2013, arXiv:1305.6597v1 [math.NT].

\bibitem{ParPot2007}
M.~G. Parker and A.~Pott, \emph{On {B}oolean functions which are bent and
  negabent}, Sequences, subsequences, and consequences, Lecture Notes in
  Comput. Sci., vol. 4893, Springer, Berlin, 2007, pp.~9--23.

\bibitem{Pot1995}
A.~Pott, \emph{Finite geometry and character theory}, Lecture Notes in
  Mathematics, vol. 1601, Springer-Verlag, Berlin, Heidelberg, 1995.

\bibitem{Pot1996}
\bysame, \emph{A survey on relative difference sets}, Groups, Difference Sets,
  and the Monster. Proceedings of a Special Research Quarter at the Ohio State
  University, Spring 1993 (Berlin) (K.~T. Arasu, J.F. Dillon, K.~Harada,
  S.~Sehgal, and R.~Solomon, eds.), Walter de Gruyter, 1996, pp.~195--232.

\bibitem{PotZho2011}
A.~Pott and Y.~Zhou, \emph{A character theoretic approach to planar functions},
  Cryptogr. Commun. \textbf{3} (2011), no.~4, 293--300.

\bibitem{RonSzo1989}
L.~R{\'o}nyai and T.~Sz{\H{o}}nyi, \emph{Planar functions over finite fields},
  Combinatorica \textbf{9} (1989), no.~3, 315--320.

\bibitem{SchZie2013}
Z.~Scherr and M.~E. Zieve, \emph{Planar monomials in characteristic $2$}, 2013,
  arXiv:1302.1244v1 [math.CO].

\bibitem{Sch2009}
K.-U. Schmidt, \emph{Quaternary constant-amplitude codes for multicode {CDMA}},
  IEEE Trans. Inform. Theory \textbf{55} (2009), no.~4, 1824--1832.

\bibitem{SchZho2013}
K.-U. Schmidt and Y.~Zhou, \emph{Planar functions over fields of characteristic
  two}, 2013, arXiv:1301.6999v1 [math.CO] (to appear in J.\ Algebraic Combin.).

\bibitem{Sin1938}
J.~Singer, \emph{A theorem in finite projective geometry and some applications
  to number theory}, Trans. Amer. Math. Soc. \textbf{43} (1938), no.~3,
  377--385.

\bibitem{WenZen2012}
G.~Weng and X.~Zeng, \emph{Further results on planar {DO} functions and
  commutative semifields}, Des. Codes Cryptogr. \textbf{63} (2012), no.~3,
  413--423.

\bibitem{ZhaKyuWan2009}
Z.~Zha, G.~M. Kyureghyan, and X.~Wang, \emph{Perfect nonlinear binomials and
  their semifields}, Finite Fields Appl. \textbf{15} (2009), no.~2, 125--133.

\bibitem{Zho2013}
Y.~Zhou, \emph{$(2^n,2^n,2^n,1)$-relative difference sets and their
  representations}, J. Combin. Des. \textbf{21} (2013), no.~12.

\bibitem{ZhoPot2011}
Y.~Zhou and A.~Pott, \emph{A new family of semifields with 2 parameters}, Adv.
  Math. \textbf{234} (2013), 43--60.

\bibitem{Zie2013}
M.~E. Zieve, \emph{Planar functions and perfect nonlinear monomials over finite
  fields}, 2013, arXiv:1301.5004v1 [math.CO] (to appear in Des.\ Codes
  Cryptogr.).

\end{thebibliography}
% \bibliographystyle{amsplain}

\providecommand{\bysame}{\leavevmode\hbox to3em{\hrulefill}\thinspace}
\providecommand{\MR}{\relax\ifhmode\unskip\space\fi MR }
% \MRhref is called by the amsart/book/proc definition of \MR.
\providecommand{\MRhref}[2]{%
  \href{http://www.ams.org/mathscinet-getitem?mr=#1}{#2}
}
\providecommand{\href}[2]{#2}

\end{document}